\tikzset{join/.code=\tikzset{after node path={%
\ifx\tikzchainprevious\pgfutil@empty\else(\tikzchainprevious)%
edge[every join]#1(\tikzchaincurrent)\fi}}}
\tikzset{>=stealth',every on chain/.append style={join},
         every join/.style={->}}
\tikzset{
    >=stealth',
    punkt/.style={
           rectangle,
           rounded corners,
           draw=black, very thick,
           text width=6.5em,
           minimum height=2em,
           text centered},
    pil/.style={
           ->,
           thick,
           shorten <=2pt,
           shorten >=2pt,}
}
\newtheorem{thm}{Theorem}[section]
\newtheorem{prop}[thm]{Proposition}
\newtheorem{lem}[thm]{Lemma}
\newtheorem{cor}[thm]{Corollary}
\theoremstyle{definition}
\newtheorem{example}[thm]{Example}
\newtheorem{remark}[thm]{Remark}
\newtheorem{definition}[thm]{Definition}
\font\black=cmbx10 \font\sblack=cmbx7 \font\ssblack=cmbx5 \font\blackital=cmmib10  \skewchar\blackital='177
\font\sblackital=cmmib7 \skewchar\sblackital='177 \font\ssblackital=cmmib5 \skewchar\ssblackital='177
\font\sanss=cmss12 \font\ssanss=cmss8 scaled 900 \font\sssanss=cmss8 scaled 600 \font\blackboard=msbm10
\font\sblackboard=msbm7 \font\ssblackboard=msbm5 \font\caligr=eusm10 \font\scaligr=eusm7 \font\sscaligr=eusm5
\font\bsymb=cmsy10 scaled\magstep2
\def\all#1{\setbox0=\hbox{\lower1.5pt\hbox{\bsymb
       \char"38}}\setbox1=\hbox{$_{#1}$} \box0\lower2pt\box1\;}
\def\exi#1{\setbox0=\hbox{\lower1.5pt\hbox{\bsymb \char"39}}
       \setbox1=\hbox{$_{#1}$} \box0\lower2pt\box1\;}
\def\sss#1{{\fam\ssfam\relax#1}}
\def\pmb#1{\setbox0\hbox{${#1}$} \copy0 \kern-\wd0 \kern.2pt \box0}
\def\pmbb#1{\setbox0\hbox{${#1}$} \copy0 \kern-\wd0
      \kern.2pt \copy0 \kern-\wd0 \kern.2pt \box0}
\def\pmbbb#1{\setbox0\hbox{${#1}$} \copy0 \kern-\wd0
      \kern.2pt \copy0 \kern-\wd0 \kern.2pt
    \copy0 \kern-\wd0 \kern.2pt \box0}
\def\pmxb#1{\setbox0\hbox{${#1}$} \copy0 \kern-\wd0
      \kern.2pt \copy0 \kern-\wd0 \kern.2pt
      \copy0 \kern-\wd0 \kern.2pt \copy0 \kern-\wd0 \kern.2pt \box0}
\def\pmxbb#1{\setbox0\hbox{${#1}$} \copy0 \kern-\wd0 \kern.2pt
      \copy0 \kern-\wd0 \kern.2pt
      \copy0 \kern-\wd0 \kern.2pt \copy0 \kern-\wd0 \kern.2pt
      \copy0 \kern-\wd0 \kern.2pt \box0}
\mathchardef\za="710B  
\mathchardef\zb="710C  
\mathchardef\zg="710D  
\mathchardef\zd="710E  
\mathchardef\zve="710F 
\mathchardef\zz="7110  
\mathchardef\zh="7111  
\mathchardef\zvy="7112 
\mathchardef\zi="7113  
\mathchardef\zk="7114  
\mathchardef\zl="7115  
\mathchardef\zm="7116  
\mathchardef\zn="7117  
\mathchardef\zx="7118  
\mathchardef\zp="7119  
\mathchardef\zr="711A  
\mathchardef\zs="711B  
\mathchardef\zt="711C  
\mathchardef\zu="711D  
\mathchardef\zvf="711E 
\mathchardef\zq="711F  
\mathchardef\zc="7120  
\mathchardef\zw="7121  
\mathchardef\ze="7122  
\mathchardef\zy="7123  
\mathchardef\zf="7124  
\mathchardef\zvr="7125 
\mathchardef\zvs="7126 
\mathchardef\zf="7127  
\mathchardef\zG="7000  
\mathchardef\zD="7001  
\mathchardef\zY="7002  
\mathchardef\zL="7003  
\mathchardef\zX="7004  
\mathchardef\zP="7005  
\mathchardef\zS="7006  
\mathchardef\zU="7007  
\mathchardef\zF="7008  
\mathchardef\zW="700A  
\newcommand{\be}{\begin{equation}}
\newcommand{\ee}{\end{equation}}
\newcommand{\raa}{\rightarrow}
\newcommand{\bea}{\begin{eqnarray}}
\newcommand{\eea}{\end{eqnarray}}
\newcommand{\beas}{\begin{eqnarray*}}
\newcommand{\eeas}{\end{eqnarray*}}
\def\*{{\textstyle *}}
\newcommand{\w}{\wedge}
\newcommand{\nn}{\nonumber}
\newcommand{\ti}{\times}
\def\ran{\rangle}
\def\cA{{\cal A}}
\def\cN{{\cal N}}
\def\cJ{{\cal J}}
\def\cO{{\cal O}}
\def\cU{{\cal U}}
\def\cF{\mathcal{F}}
\def\sT{{\sss T}}
\def\cM{{\cal M}}
\newcommand{\eps}{\varepsilon}
\newcommand{\mZ}{\mathbb{Z}_2}
\newcommand{\p}{\partial}
\newcommand{\la}{\langle}
\newcommand{\ra}{\rangle}
\newcommand{\Ci}{C^{\infty}}
\newcommand{\N}{\mathbb{N}}
\newcommand{\Z}{\mathbb{Z}}
\newcommand{\R}{\mathbb{R}}
\newcommand{\lp}{\left(}
\newcommand{\rp}{\right)}
\newcommand{\op}[1]{\!\!\mathop{\rm ~#1}\nolimits}
\begin{document}
\title{\bf $\mathbb{Z}_2^n$-Supergeometry I\\ Manifolds and Morphisms}
\date{}
\author{Tiffany Covolo, Janusz Grabowski, and Norbert Poncin}

\maketitle

\begin{abstract}{In Physics and in Mathematics $\Z_2^n$-gradings, $n\ge 2$, do appear quite frequently. The corresponding sign rules are determined by the `scalar product' of the involved $\Z_2^n$-degrees. The present paper is the first of a series on $\Z_2^n$-Supergeometry. The new theory exhibits challenging differences with the classical one: nonzero degree even coordinates are not nilpotent, and even (resp., odd) coordinates do not necessarily commute (resp., anticommute) pairwise (the parity is the parity of the total degree). It is based on the hierarchy: `$\,\Z_2^0$-Supergeometry (classical differential Geometry) contains the germ of $\Z_2^1$-Supergeometry (standard Supergeometry), which in turn contains the sprout of $\Z_2^2$-Supergeometry, etc.' The $\Z_2^n$-supergeometric viewpoint provides deeper insight and simplified solutions; interesting relations with Quantum Field Theory and Quantum Mechanics are expected. In this article, we define $\Z_2^n$-supermanifolds and provide examples in the atlas, the ringed space and coordinate settings. We thus show that formal series are the appropriate substitute for nilpotency. Moreover, the category of $\Z_2^n$-supermanifolds is closed with respect to the tangent and cotangent functors. The fundamental theorem describing supermorphisms in terms of coordinates is extended to the $\Z_2^n$-context. }
\end{abstract}

\vspace{2mm} \noindent {\bf MSC 2010}: 17A70, 58A50, 13F25, 16L30 \medskip

\noindent{\bf Keywords}: Supersymmetry, supergeometry, superalgebra, higher grading, sign rule, ringed space, manifold, morphism, tangent space, higher vector bundle

\thispagestyle{empty}
\tableofcontents

\section{Introduction}

Classical Supersymmetry and Supergeometry are not sufficient to suit the current needs. {In {\it Physics}, $\Z_2^n$-gradings, $n\ge 2,$ are used in string theory and in parastatistics \cite{AFT10}, \cite{YJ01}.} In {\it Mathematics}, there exist good examples of $\Z_2^n$-graded {\it $\Z_2^n$-commutative algebras} (i.e. the superscript of $-1$ in the sign rule is the standard `scalar product' of $\Z_2^n$): quaternions and, more generally, any Clifford algebra, the algebra of Deligne differential superforms... And there exist interesting examples of {\it $\Z^n_2$-supermanifolds}: {e.g., tangent and cotangent bundles $\sT M$ and $\sT^\star M$, double vector bundles such as $\sT\sT M$, and, more generally, $n$-vector bundles...}\medskip

Indeed, the tangent bundle of a classical $\Z_2^1$-supermanifold $\cM$ is a $\Z_2^1$-supermanifold $\sT[1]\cM$ (resp., a $\Z_2^2$-supermanifold $\sT\cM$) with function sheaf the differential superforms of $\cM$ together with the Bernstein-Leites (resp., with the Deligne) sign convention. Actually the tangent (and cotangent) bundle(s) of any $\Z_2^n$-supermanifold is a (are) $\Z_2^{n+1}$-supermanifold(s). Further, any $n$-vector bundle canonically provides a $\Z_2^n$-supermanifold {as its `superization'}.\medskip

To be more precise, suppose that $\cM$ is a supermanifold with local coordinates $(x^1,\dots,x^p,$ $\zx^1,\dots,\zx^q)$, where the $x^i$ are even and the $\zx^a$ odd. For the tangent bundle $\sT\cM$, with the adapted local coordinates $(x^i,\zx^a,\dot x^j,\dot\zx^b)$, one can introduce a supermanifold structure, in principle, in two ways: declaring $\dot x^j$ to be even and $\dot\zx^b$ to be odd, or reversing these parities.\medskip

For the latter, {the variables $\dot\zx^b$ are even; they are true real-valued variables although the $\zx^b$ are indeterminates or formal variables. Indeed, the $\dot\zx^b$ can for instance be thought of as the velocities of a spinning particle. These can actually be measured, whereas the $\xi^b$ cannot. Hence, the local model of the function algebra of the corresponding supermanifold $\sT[1]\cM$ is given by the polynomials in the odd indeterminates $\zx^a$ and $\dot x^j$ with coefficients in the smooth functions with respect to the even variables $x^i$ and $\dot \zx^b$. These polynomials are referred to as pseudodifferential forms. Pseudodifferential forms can be defined not only locally on superdomains but also globally on supermanifolds \cite{Lei11}. They have been introduced in \cite{BL77} to obtain objects suitable for integration. The algebra $\widehat{\zW}(\cM)$ of pseudodifferential forms on $\cM$ is exactly the function algebra of the supermanifold $\sT[1]\cM$.\medskip}

On the other hand, $\sT\cM$, as every vector bundle, admits an $\N$-grading for which $\dot x^j$ and $\dot\zx^b$ are of degree 1 {(and $x^i$ and $\zx^a$ are of degree 0)}. Thus we have a canonical bigrading by the monoid $\N\times \mZ$, which can be reduced to $\mZ^2=\mZ\ti\mZ$. With respect to this bigrading, $(x^i,\zx^a,\dot x^j,\dot\zx^b)$ are of bidegrees $(0,0),(0,1),(1,0)$, and $(1,1)$, respectively. Now, any symmetric biadditive map $\la -,- \ran:\mZ^2\ti\mZ^2\to\Z$ gives rise to a sign rule: $$AB=(-1)^{\la(m,n),(k,l)\ran}BA\;,$$ where $A$ and $B$ are coordinates of bidegrees $(m,n)$ and $(k,l)$, respectively. We get the usual sign rule when choosing $\la(m,n),(k,l)\ran=mk$, and obtain the sign rule for reversed parity (Berntein-Leites sign rule) choosing
$\la(m,n),(k,l)\ran=(m+n)(k+l)$, whereas the `scalar product' $\la(m,n),(k,l)\ran=mk+nl$ has been used by Deligne -- see discussion in \cite[Appendix to \S 1]{DM99}. Note that the latter does not lead to a superalgebra, as the $\dot\zx^b$ are even in the sense that they commute among themselves, but anticommute with the $\zx^a$. Not excluding any sign rule forces us to work with this bigrading and to include Deligne's convention into the picture, which -- as mentioned -- does not correspond to any supermanifold. It is therefore natural to extend the notion of supermanifold admitting $\mZ^n$-gradings and the corresponding sign rule \be\label{Z_2^n-com}AB=(-1)^{\sum_{i=1}^n m_ik_i}BA\;\ee (here $m=(m_1,\ldots,m_n)$ and $k=(k_1,\ldots,k_n)$ are the degrees of $A$ and $B$, respectively), so that the additional canonical gradings on $\sT\cM$ or $\sT^\*\cM$ do not move us out of the corresponding category.\medskip

Although not universally accepted at the beginning, $\Z_2^n$-Supergeometry is thus a necessary and natural generalization. When defining the parity of a $\Z_2^n$-degree as the parity of the total degree, nonzero degree even coordinates are not nilpotent, and even (resp., odd) coordinates do not necessarily commute (resp., anticommute) pairwise. These circumstances lead to challenging {differences} with the classical theory. The reason for initial skepticism was Neklyudova's equivalence \cite{Lei11}: this result states that the categories of $\Z_2^n$-graded {\it $\Z_2^n$-commutative} and $\Z^n_2$-graded {\it supercommutative} algebras are equivalent. However, our previous work shows that pullbacks of `{\it supercommutative} concepts' to the {\it $\Z_2^n$-commutative} setting are not always as easy as expected, and do not always operate properly: Neklyudova's theorem {\bf does not ban} studies of $\Z_2^n$-commutative algebras! On the other hand, $\Z_2^n$-commutative algebras {\bf are sufficient}, in the sense that any sign rule, for any finite number $m$ of coordinates, is of the form (\ref{Z_2^n-com}), for some $n\le 2m$.\medskip

Actually $\Z_2^n$-Supergeometry focusses on the following hierarchy: classical differential Geometry ($\Z_2^0$-Supergeometry) contains the germ (differential forms) of standard Supergeometry ($\Z_2^1$-Supergeometry), which in turn contains the sprout (Deligne superforms) of $\Z_2^2$-Supergeometry... The $\Z_2^n$-supergeometric viewpoint provides improved insight and simplified solutions, thus emphasizing an effect already observed for classical Supergeometry. Tight relations with diverse concepts in Quantum Field Theory and Quantum Mechanics can be expected. The theory of $\Z_2^n$-supermanifolds is closely related to Clifford calculus. Clifford algebras have numerous applications in Physics, {but the use of $\Z_2^n$-gradings has never been studied. For instance, Clifford algebras and modules are crucial in understanding Spin-structures on manifolds together with their physical consequences (e.g., the Dirac operator); while the $\Z_2^n$-refined viewpoint led to new results on Clifford algebras and modules over them [COP12], its impact on standard applications in Geometry and Field Theory has still to be explained -- the results of this project are currently being written down in a separate paper.} Further, our theory should lead to a novel approach to quaternionic functions: examples of application areas include thermodynamics, hydrodynamics, geophysics and structural mechanics. It is further interesting to observe the parallelism of our extension with Baez' suggestion of a common generalization -- under the name of $r$-Geometry -- of superalgebras and Clifford algebras with the goal to incorporate, besides bosons and fermions, also anyons into the picture \cite{Bae92}.\medskip

Finally, the key-concept of $\Z_2^n$-Superalgebra is the $\Z_2^n$-Berezinian. This higher Berezinian (which is tightly connected with quasi-determinants) and the corresponding (via the Liouville formula) higher trace have recently been constructed \cite{COP12}. It provides a new solution (`different' from the Dieudonn\'e determinant) to Cayley's challenge to build a determinant of quaternionic matrices. Hence, our $\Z_2^n$-Supergeometry not only includes differential but also integral calculus, whereas Molotkov, who developped a (functorial) concept of $\Z_2^n$-supermanifold, mentions explicitly that he has no insight in this respect \cite{Mol10}.\medskip

The paper is organized as follows. In Section 2, we prove that any sign rule, for any finite set of variables, is of the type (\ref{Z_2^n-com}), for some $\Z_2^n$-grading. The necessity to consider $\Z_2^n$-superdomains, characterized just as in \cite{Mol10} by algebras of formal power series, is explained in Section 3. Moreover, invertibility, locality and completeness issues are addressed, and a coordinate version of the $\Z_2^n$-supermorphism theorem is proved. In Sections 2 and 3, we use exclusively coordinate computations, thus allowing the reader to get acquainted with the specifics and foundations of the new theory. The latter is developed in the next sections via the atlas approach, as well as, mainly, in the ringed space setting. The concept of $\Z_2^n$-supermanifold is introduced in Section 4. In Section 5, we detail first examples. Section 6 contains the main proofs of the work at hand. We show that most important results of classical Supergeometry extend to the $\Z_2^n$-context, although nilpotency is lost in this generalized framework -- it turns out that formal series are the appropriate substitute. We prove that $\Z_2^n$-superfunctions project consistently to the base and that the latter actually carries a smooth manifold structure. Continuity of the pullback maps of morphisms between $\Z_2^n$-supermanifolds with respect to the filtration provided by the kernel of the base projection, as well as continuity of the induced maps between stalks with respect to the filtration implemented by the unique maximal homogeneous ideal -- combined with an appropriate polynomial approximation of $\Z_2^n$-superfunctions --, allow us to show that the fundamental theorem of supermorphisms extends to the $\Z_2^n$-setting. Complementary information can be found in the appendix-section 7.\medskip

Let us finally provide a non-exhaustive list of references on classical supermanifolds and related topics that were of importance for the present text: \cite{DAL}, \cite{Lei11}, \cite{Var}, \cite{Man}, \cite{DM99}, \cite{CCF}, \cite{DSB}, \cite{Vor12}, \cite{CR12}, \cite{BP12}, \cite{GKP1}, \cite{GKP2}.

\section{Sign rules}

Supergeometry is commonly understood as the theory of manifold-like objects admitting anticommuting variables. This corresponds to a $\mZ$-grading in the structure sheaf of the corresponding ringed space, so that even elements are central (commute with everything) and odd elements anticommute among themselves. In particular, they are 2-nilpotent.
This means that the sign rules between generators of the algebra are completely determined by their \emph{parity}. Why not accept arbitrary commutation rules between different generators, even with fixed parities?
In principle, one can consider a general grading by a semigroup and an arbitrary commutation factor, i.e. work with so-called \emph{colored algebras}.\medskip

More precisely, let $K$ be a commutative unital ring, $K^\times$ be the group of invertible elements
of $K$, and let $G$ be a commutative semigroup. A map $\ze: G \ti G \to K^\times$ is called a \emph{commutation factor} on $G$ if
$$\ze(g,h)\ze(h,g) = 1\,,\quad\ze(f,g+h)=\ze(f,g)\ze(f,h)\,,\quad \text{and}\quad \ze(g,g) = \pm 1\,,$$
for all $f, g, h\in G$.
Note that these axioms imply that
$$\ze(f+g,h)=\ze(f,h)\ze(g,h)$$
and that the condition $\ze(g,g) = \pm 1$ follows automatically from the other two axioms if $K$ is a field.\medskip

Let $\cA$ be a $G$-graded $K$-algebra $\cA=\bigoplus_{g\in G}\cA^g$. Elements $x$ from $\cA^g$ are called \emph{$G$-homogeneous
of degree} or \emph{weight} $g=:\op{deg}(x)$. The algebra $\cA$ is said to be \emph{$\ze$-commutative} if
$$ a b = \ze(\op{deg}(a),\op{deg}(b))b a\;,$$
for all $G$-homogeneous elements $a, b \in \cA$. Homogeneous elements $x$ with $p(\op{deg}(x))=p(g):=\ze(g,g)=-1$
are \emph{odd}, the other homogeneous elements are \emph{even}.\medskip

In what follows, $K$ will be $\R$ and $\ze$ will take the form $$\ze(g,h)=(-1)^{\la g, h\ran}\;,$$ for
a `scalar product' $\la -,- \ran:G\ti G\to\Z$. This means that we use the {\it commutation factor} as the \emph{sign rule}. In this note we confine ourselves to $G=\mZ^n$ and the standard `scalar product' of $\Z_2^n$, what will lead to $\Z_2^n$-Supergeometry with nicer categorical properties than the standard Supergeometry. More precisely, we propose a generalization of differential $\mZ^1$-Supergeometry to the case of a $\mZ^n$-grading in the structure sheaf.\medskip

Indeed, we will show that any sign rule, for any finite number of coordinates, can be obtained from the `scalar product'
$$\la(i_1,\dots,i_n),(j_1,\dots,j_n)\ran_n=i_1j_1+\ldots +i_nj_n
$$ on $\mZ^n$ for a sufficiently big $n$. In other words, any algebra that is finitely generated by some generators satisfying certain sign rules can be viewed as a $\mZ^n$-graded associative algebra $\cA=\bigoplus_{i\in\mZ^n}\cA^i$, which is $\Z_2^n$-commutative in the sense that
$$y^iy^j=(-1)^{\la i,j\ran_n}y^jy^i\;,$$
for all $y^i\in\cA^i$, $y^j\in\cA^j$. We simply refer to such algebras as {$\mZ^n$-\emph{commutative associative algebras}}. Let us mention that a similar theorem was proved in \cite{MGO2} {for group gradings}.\medskip

Let now $S$ be a finite set, say $S=\{ 1,\dots,m\}$, and let $\ze:S\ti S\to\{\pm 1\}$ be any symmetric function. We can understand $\ze$ as a sign rule for an associative algebra generated by elements $y^i$, $i=1,\dots, m$, i.e.
$$y^iy^j=\ze(i,j)y^jy^i\,.$$

We then have the
\begin{thm}\label{SuffGrad} There is $n\le 2m$ and a map $\zs:S\to\mZ^n$, $i\mapsto\zs_i$, such that \be\label{sr}\ze(i,j)=(-1)^{\la\zs_i,\zs_j\ran_n}\;.
\ee
\end{thm}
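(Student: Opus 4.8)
The plan is to build the map $\zs$ one generator at a time while maintaining, as an invariant, that the diagonal entries $\la\zs_i,\zs_i\ran_n$ encode the correct parity $p(i):=\ze(i,i)\in\{\pm1\}$ and that the off-diagonal scalar products $\la\zs_i,\zs_j\ran_n$ reproduce the prescribed signs $\ze(i,j)$ modulo $2$. The key structural observation is that the data $\ze(i,j)\in\{\pm1\}$ is equivalent to a symmetric $\mZ_2$-valued datum $\eps(i,j)\in\{0,1\}$ via $\ze(i,j)=(-1)^{\eps(i,j)}$, so that the theorem amounts to: \emph{every symmetric $\mZ_2$-bilinear-type form (a symmetric matrix over $\mZ_2$) on an $m$-element set is the restriction of the standard dot product $\la-,-\ran_n$ on $\mZ_2^n$ to $m$ well-chosen vectors}, with $n\le 2m$. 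Phrased this way the statement is a realizability result for symmetric $\mZ_2$-matrices by Gram matrices over $\mZ_2$ with respect to the standard form.

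First I would set up the inductive construction on $m$. Suppose generators $1,\dots,m-1$ have already been assigned vectors $\zs_1,\dots,\zs_{m-1}\in\mZ_2^{n_0}$ with $n_0\le 2(m-1)$ realizing all the required dot products among themselves. To adjoin generator $m$, I enlarge the ambient space by two fresh coordinates, say indexed by $a$ and $b$, passing to $\mZ_2^{n_0+2}$; the old vectors get $0$ in these two slots, so their mutual products are unchanged. Now I must choose $\zs_m=(v;\,\alpha,\beta)$, where $v\in\mZ_2^{n_0}$ is a linear combination of the old coordinate directions and $\alpha,\beta\in\mZ_2$ are the entries in the two new slots, so that (i) $\la\zs_m,\zs_j\ran = \la v,\zs_j\ran = \eps(m,j)$ for each $j<m$, and (ii) $\la\zs_m,\zs_m\ran = \la v,v\ran + \alpha^2+\beta^2 = \la v,v\ran + \alpha+\beta = p(m) \pmod 2$. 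Condition (i) is a system of $m-1$ linear equations over $\mZ_2$ in the coordinates of $v$; the new slots $a,b$ are invisible to it because the old vectors vanish there. Condition (ii), once $v$ is fixed, is a single equation $\alpha+\beta = p(m)-\la v,v\ran$ over $\mZ_2$, which always has a solution (e.g. $\beta=0$, $\alpha = p(m)-\la v,v\ran$; the second new slot $b$ provides the slack that makes the base case and the bookkeeping uniform, and guarantees we never need more than $2$ new coordinates per generator).

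The main obstacle, and the step that needs genuine argument rather than routine linear algebra, is the solvability of system (i): I need to know that the linear functionals $v\mapsto\la v,\zs_j\ran_{n_0}$, $j=1,\dots,m-1$, are \emph{jointly surjective} onto the right target, i.e. that no nontrivial $\mZ_2$-linear dependence among these functionals is violated by the prescribed values $\eps(m,j)$. Equivalently, writing $G$ for the Gram matrix of $\zs_1,\dots,\zs_{m-1}$ with respect to $\la-,-\ran_{n_0}$, I need the vector $(\eps(m,j))_j$ to lie in the column space of $G$ over $\mZ_2$. This is where the two extra coordinates per step and a careful choice of the inductive hypothesis pay off: I would strengthen the induction to assert not merely that the $\zs_j$ realize the required products but that they are \emph{linearly independent} in $\mZ_2^{n_0}$ (hence $n_0\le m-1$ would be forced, leaving room up to $2m$), so that $G$ can be made nonsingular — or, failing that, I would track the kernel relations explicitly and use the freedom in the fresh coordinates to correct any obstruction. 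Concretely, if the old vectors are linearly independent then $G$ may still be singular over $\mZ_2$ (the standard form is not anisotropic), so the clean route is: augment so that the configuration $\zs_1,\dots,\zs_{m-1}$ sits inside a hyperbolic space on which the form is nondegenerate and the Gram matrix is invertible; then (i) is solvable for a unique $v$, (ii) is solved in the new slots, and one checks $n$ grows by at most $2$ each step, giving $n\le 2m$. Verifying that the hyperbolic-augmentation can be carried along the induction — and that it costs only two coordinates per generator — is the real content; the rest is bookkeeping over $\mZ_2$.
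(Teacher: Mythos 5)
You have correctly located where the work lies --- the solvability of your system (i) --- but your analysis of that step contains an error, and the resolution you sketch cannot work as stated, so the proof has a genuine gap. First, the reformulation ``$(\eps(m,j))_j$ must lie in the column space of the Gram matrix $G$'' is \emph{not} equivalent to solvability of (i): the unknown $v$ ranges over all of $\mZ^{n_0}$, not over the span of $\zs_1,\dots,\zs_{m-1}$, so the correct criterion is that every linear relation $\sum_j c_j\zs_j=0$ forces $\sum_j c_j\eps(m,j)=0$; in particular linear independence of the $\zs_j$ alone suffices, since the standard dot product on $\mZ^{n_0}$ is nondegenerate. Second, and more seriously, the ``clean route'' of arranging for $G$ to be invertible is impossible: by your inductive hypothesis $G$ \emph{is} the prescribed matrix $(\eps(i,j))_{i,j<m}$ (with diagonal the parities), which is given data and can perfectly well be singular --- if all $\ze(i,j)=1$ and all generators are even, then $G=0$. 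No hyperbolic augmentation of the ambient space changes $G$. The salvageable version of your argument is to strengthen the induction to linear independence of the $\zs_j$; but your own choice $\beta=0$, $\alpha=p(m)-\la v,v\ran$ does not guarantee this (both fresh slots may vanish while $v$ lies in the span of the old vectors), whereas $\alpha=1$, $\beta=1+p(m)+\la v,v\ran$ would fix both the parity and the independence. As written, you explicitly defer ``the real content'' to an unverified augmentation claim, and the claim as formulated is false.

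For comparison, the paper's proof sidesteps the solvability question entirely. It also spends two coordinates per generator, but organizes them differently: generator $r+1$ receives a private coordinate in which it equals $1$ and all earlier generators vanish (the second private slot being used only to adjust its own parity), and the dot products of all \emph{later} generators with $\zs_{r+1}$ are then corrected using only their entry in that private slot. The resulting system is triangular with unit diagonal, so each unknown entry is solved for explicitly and no rank or independence argument is ever needed.
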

\begin{proof}
We interpret $\mZ^{2m}$ as the set of functions $\{\pm 1,\dots,\pm m\}\to\mZ$, and denote by $p(i,j)\in\{0,1\}$ the \emph{parity} of $\ze(i,j)$: $(-1)^{p(i,j)}=\ze(i,j)$.

First, define $\zs_1\in \mZ^{2m}$ by $\zs_1(1)=1$, $\zs_1(-1)=1+p(1,1)\in\mZ$, and $\zs_1(k)=0$ for $|k|>1$. Then,  for $j=2,\dots,m$, define $\zs_j(1)=p(j,1)$ and $\zs_j(-1)=0$. Independently of the definition of the remaining values of $\zs_j$, Condition (\ref{sr}) is valid for $i=1$ and all $j=1,\dots, m$, since $\zs_1(k)=0$ for $|k|>1$.

Assume inductively that we have fixed $\zs_1,\dots,\zs_r$, with $\zs_j(k)=0$ for $|k|>j$, as well as the values $\zs_j(k)$, for $j=r+1,\dots, m$ and $|k|\le r$, so that (\ref{sr}) is valid for $i=1,\dots,r$ and all $j$.

Define:
$$\zs_{r+1}(r+1)=1\,, \quad\zs_{r+1}(-r-1)=1+\sum_{|k|=1}^r\zs_{r+1}(k)+p(r+1,r+1)\,,\quad \zs_{r+1}(k)=0\ \text{for}\ |k|>r+1\,.$$
Then, (\ref{sr}) is valid also for $i=j=r+1$. Putting now $\zs_j(-r-1)=0$ and
$$\zs_j(r+1)=\sum_{|k|=1}^r\zs_j(k)\zs_{r+1}(k)+p(j,r+1)$$
for $j=r+2,\dots, m$, we finish with fixed $\zs_1,\dots,\zs_{r+1}$, with $\zs_j(k)=0$ for $|k|>j$, and the values $\zs_j(k)$, for $j=r+2,\dots, m$ and $|k|\le r+1$, so that (\ref{sr}) is valid for $i=1,\dots,r+1$ and all $j$. This proves the inductive step and the theorem follows.
\end{proof}

Let now $y^1,\dots,y^m$ be `variables' with $\mZ^n$-degrees fixed by a map $\zs:\{ 1,\dots,m\}\to\mZ^n$. We can consider $\R[y^1,\dots,y^m]_\zs$, which is the free graded tensor algebra over reals generated by variables $y^1,\dots,y^m$ modulo the commutation relations described by $\zs$,
$$
y^iy^j=(-1)^{\la \zs_i,\zs_j\ran_n}y^jy^i\;
$$
\cite{CM14}. This algebra is referred to as the \emph{free $\zs$-commutative associative $\R$-algebra}, or, if $\zs$ is fixed, the \emph{free $\Z_2^n$-commutative associative $\R$-algebra} in $m$ generators. Moreover, if $n$ is fixed, we usually omit subscript $n$ in $\la-,-\ra_n$. The variable $y^i$ is even (resp., odd) if $$p(y^i):=|\zs_i|:=\zs_i(1)+\ldots+\zs_i(n)\in\mZ$$ is 0 (resp., 1). We can write every element of $\R[y^1,\dots,y^m]_\zs$ uniquely as a polynomial
$$
 f(y)=\sum_{|\mu|=0}^{N_f}
  f_{\mu_1\ldots\mu_m}(y^1)^{\mu_1}\ldots(y^m)^{\mu_m}
 =\sum_{|\mu|=0}^{N_f} f_\mu y^{\,\mu}\,,$$
where $|\mu|=\zm_1+\ldots+\zm_m$. 

\section{$\mathbb{Z}_2^n$-superdomains and their morphisms}

To develop a generalization of Supergeometry, we wish to distinguish coordinates $x^1,\dots, x^p$ of degree $0:=(0,\ldots,0)\in\Z_2^n$ and view them as local coordinates on a standard manifold. The remaining coordinates $\zx^1,\dots,\zx^q$ have nontrivial degrees $\zs_1,\dots,\zs_q\in\Z_2^n\setminus\{ 0\}$ determined by a fixed map $\zs:\{1,\dots,q\}\to\Z_2^n\setminus\{ 0\}$. We will call them \emph{indeterminates} or \emph{formal variables}.

\subsection{Sheaf of polynomials}

The first idea is to define a $\zs$-superdomain or $\Z_2^n$-superdomain as a \emph{ringed space}
$\frak{U} = (U,\frak{O}_{U,\zs})$, where $U\subset\R^p$ is an open subset and the structure sheaf is given by
$$
 \frak{O}_{U,\zs}(-):=
 C^\infty_U(-)[\xi^1,\dots,\xi^q]_\zs\;.
$$
Here $\xi^1,\dots,\xi^q$ is a sequence of variables of $\Z_2^n$-degrees $\zs_a$, i.e. commuting according to
\be\label{comru}
\zx^a\zx^b=(-1)^{\la\zs_a,\zs_b\ran_n}\zx^b\zx^a\;.
\ee
As already mentioned above, we omit in the sequel subscript $\zs,$ since this map is fixed. Thus, on $V\subset U$, our algebra of superfunctions would be the $\Z_2^n$-commutative associative unital $\R$-algebra
$$
\frak{O}_U(V)=C^\infty_U(V)[\zx^1,\dots,\zx^q]
$$
of polynomials
$$
 f(x,\zx)=\sum_{|\mu|=0}^{N_f}
  f_{\mu_1\ldots\mu_q}(x)\;(\zx^1)^{\mu_1}\ldots(\zx^q)^{\mu_q}
 =\sum_{|\mu|=0}^{N_f} f_\mu(x) \zx^\mu\,
$$
in the variables $\zx^a$ and with coefficients in the ring $\Ci(V)$, whose multiplication is subject to the sign rules determined by (\ref{comru}). Note that we omit subscripts like $U$, whenever we do not wish to stress the (Hausdorff, second-countable) topological space over which the considered sheaf is defined. Of course, those $\zx^a$ which are odd, $p(\zx^a)=1$, appear in the polynomials with exponents $\le 1$.\medskip

Morphisms $\frak{O}(W)\to \frak{O}(V)$ of $\Z_2^n$-commutative associative unital $\R$-algebras (in particular changes of coordinates) should preserve the grading, so have the form
\bea\label{mor}
x'^i&=&\zf^i(x)+\sum_{\op{deg}(\xi^\mu)=0} f^i_\mu(x) \zx^\mu\,,\\
\zx'^a&=&\sum_{\op{deg}(\xi^\mu)=\zs_a} f^a_\mu(x) \zx^\mu\,,\nn
\eea
where the functions $f_\zm:V\to\R^p$ and the map $\zf:V\to W$ are smooth, and the sums are finite.\medskip

It is easy to see that the ideal $\frak{J}(V)\subset \frak{O}(V)$ generated by the formal variables is respected by morphisms and that the projection
$$\frak{p}_V:\frak{O}(V)\to \frak{O}(V)/\frak{J}(V)\simeq \Ci(V)$$
is covariantly defined (we come back to this and similar points later on).

However, this approach has clear shortcomings.

First, as we allow formal variables which are even, the ideal $\frak{J}(V)$ is not nilpotent, in general, so superfunctions $f$ with invertible `body' $\frak{p}_V(f)$ need not to be invertible in the ring $\frak{O}(V)$. Formal inverting of polynomials requires using formal power series.

Second, for a proper development of differential calculus, we should be able to compose elements of degree $0$, see (\ref{mor}), with arbitrary differentiable functions and not only polynomials. But what is $F(x+\zx^2)$ for a differentiable $F$ and formal  even variable $\zx$? Since $\xi$ is not nilpotent, the Taylor formula (proceed as in standard Supergeometry) leads again to a formal power series.

\subsection{Sheaf of formal power series}

{A consistent differential calculus for $\Z_2^n$-superdomains forces us to complete the structure sheaf to formal power series in the indeterminates. This local model is the same than the one obtained by Molotkov \cite{Mol10} via his functorial approach to higher graded supermanifolds. It should be noticed that, when the $\Z_2^n$ sign rule (\ref{comru}) replaces the classical super sign rule, even indeterminates may anticommute with even and with odd indeterminates (e.g., if they are $\Z_2^3$-graded and have the degrees $(1,1,0)$, $(0,1,1)$ and $(0,1,0)$, respectively). On the other hand, the algebra of quaternions $$\mathbb{H}=\R\oplus \R i\oplus \R j \oplus \R k$$ is $\Z_2^3$-commutative, if we choose the degrees $\deg 1=(0,0,0)$, $\deg i=(1,1,0)$, $\deg j=(1,0,1)$, and $\deg k=(0,1,1)$. All this shows that the even non-zero degree indeterminates are not usual even variables and that we should think about them as formal variables rather than as ordinary ones. Hence, the local $\Z_2^n$-function algebra will be made of formal power series in the odd and the non-zero degree even indeterminates. Indeed, what would for instance be the definition of a smooth function with respect to $i,j,k$? What would be the meaning of the sine $\sin(a+bi+cj+dk)$ of a quaternion?}\medskip

{It follows that in the case of the tangent bundle to a supermanifold $\cM$, the local functions of the supermanifold $\sT[1]\cM$ and of the $\Z_2^2$-manifold $\sT\cM$ are quite different. As mentioned in the introduction, if $(x^i,\zx^a)$ are even and odd coordinates of $\cM$, the supermanifold $\sT[1]\cM$ has coordinates $(x^i,\zx^a,\dot x^j,\dot \zx^b)$ of parities $(0,1,1,0)$ and subject to the super commutation rule; the superfunctions of $\sT[1]\cM$ are thus locally the polynomials in $\zx^a,\dot x^j$ with coefficients in the smooth functions with respect to $x^i,\dot \zx^b$. The case of the $\Z_2^2$-manifold $\sT\cM$, with coordinates $(x^i,\zx^a,\dot x^j,\dot \zx^b)$ of bidegrees $((0,0),(0,1),(1,0),(1,1))$ and subject to the $\Z_2^2$-commutation rule, is different. Its $\Z_2^2$-functions are locally the formal power series in the non-zero degree formal variables $\zx^a,\dot x^j, \dot \zx^b$ with coefficients in the smooth functions in $x^i$.}\medskip

{It follows that the base of a $\Z_2^n$-manifold corresponds, not to the even variables but to the zero-degree ones. In this sense $\Z_2^n$-supermanifolds are similar to $\Z$-graded manifolds. Indeed, the base of a $\Z$-graded manifold $\cM$ can be recovered as the spectrum of $C^0(\cM)/(I\cap C^0(\cM))$, where $C^0(\cM)$ are the zero-degree functions of $\cM$ and where $I$ is the ideal generated by the coordinates of non-zero degree \cite{Roy02}.}\medskip

In what follows, we consider the  $n$-tuples of $\Z_2^n$ as ordered lexicographically.

\begin{definition} Let $n,p,q\in\N$ and let $\zs:\{1,\ldots,q\}\to\Z_2^n\setminus\{0\}$. Denote by $q_k\in\N$, where $k\in\{1,\ldots,2^n-1\},$ the number of degrees $\zs_a$ that coincide with the $k$-th element of $\Z_2^n\setminus\{0\}$ and set $\mathbf{q}=(q_1,\ldots,q_{2^n-1})$. A \emph{$\zs$-superdomain} or \emph{$\Z_2^n$-superdomain} of dimension $p|\mathbf{q}$ is a \emph{ringed space}
${\cU^{\,p|\mathbf{q}}} = (U,{\cO}_{U,\zs})$, where $U\subset\R^p$ is an open subset and the structure sheaf is the sheaf
$$
 {\cO}_{U,\zs}(-):=
 C^\infty_U(-)[[\xi^1,\dots,\xi^q]]_\zs\;.
$$
\end{definition}

Over $V\subset U,$ the algebra of $\Z_2^n$-functions is the $\Z_2^n$-commutative associative unital $\R$-algebra
$$
{\cO}_U(V)=C^\infty_U(V)[[\zx^1,\dots,\zx^q]]
$$
of formal power series
$$
 f(x,\zx)=\sum_{|\mu|=0}^{\infty}
  f_{\mu_1\ldots\mu_q}(x)(\zx^1)^{\mu_1}\ldots(\zx^q)^{\mu_q}
 =\sum_{|\mu|=0}^{\infty} f_\mu(x) \zx^\mu\;
$$
in formal variables $\xi^1,\dots,\xi^q$ of degrees $\zs_1,\ldots,\zs_q$ commuting according to
(\ref{comru}), and with coefficients in $\Ci(V)$.\medskip

We refer to a ringed space of $\Z_2^n$-commutative associative unital $\R$-algebras as a \emph{$\Z_2^n$-ringed space}.

\begin{example}\label{FundaExa} Consider the case $n=2$ and $p|q_1|q_2|q_3=1|1|1|1$, write for simplicity $(x,\xi,\zh,\zy)$ instead of $(x,\xi^1,\xi^2,\xi^3)$, and choose $\zs_\xi=(0,1),\zs_\zh=(1,0)$, and $\zs_\zy=(1,1)$. A $\Z_2^2$-function is then of the form 
\beas\label{Superfunction}
f(x,\xi,\zh,\zy)&=&\sum_{r\ge 0} f_r(x)\zy^{2r}+\sum_{r\ge 0} g_r(x)\zy^{2r+1}\xi\zh+\sum_{r\ge 0} h_r(x)\zy^{2r}\xi+\sum_{r\ge 0} k_r(x)\zy^{2r+1}\zh \\
&& +\sum_{r\ge 0} \ell_r(x)\zy^{2r}\zh+\sum_{r\ge 0} m_r(x)\zy^{2r+1}\xi+\sum_{r\ge 0} n_r(x)\zy^{2r+1}+\sum_{r\ge 0} p_r(x)\zy^{2r}\xi\zh\;,
\eeas
where the sums are formal series and the functions in $x$ are smooth. Note that the first (resp., second, third, fourth) two sums contain terms of $\Z_2^2$-degree $(0,0)$ (resp., $(0,1)$, $(1,0)$, and $(1,1)$).\end{example}

\subsection{Locality of $\Z_2^n$-superdomains}

In classical Supergeometry a (super) ringed space is called a \emph{space} if all its stalks are local rings, i.e. rings that have a unique maximal homogeneous ideal. Such ringed spaces are referred to as \emph{locally ringed spaces}. Further, a ringed space is a \emph{supermanifold} if it is a space that is locally modelled on a superdomain. Superdomains are thus `trivial' locally ringed spaces. Of course, one has to verify that the stalks of a superdomain are local rings.\medskip

To show that $\Z_2^n$-superdomains are \emph{locally $\Z_2 ^n$-ringed spaces}, we need two lemmas.\medskip

Let $R$ be a commutative unital ring and let $(\xi^1,\ldots,\xi^q)$ be a finite number of $\Z_2^n\setminus\{0\}$-graded parameters, which satisfy $$\xi^i\xi^j=(-1)^{\la \op{deg}(\xi^i),\op{deg}(\xi^j)\ra}\xi^j\xi^i$$ (the scalars $R$ are assumed to be central). We denote by $R[[\xi^1,\ldots,\xi^q]]$ the $\Z_2^n$-commutative associative unital $R$-algebra of formal series in the $\xi^a$ with coefficients in $R$.

\begin{lem} Any series $1-v$, where $v=\sum_{|\zm|>0}v_{\zm}\xi^{\zm}$ has no independent term, is invertible, with inverse $v^{-1}=\sum_{k\geq0} v^k$.\end{lem}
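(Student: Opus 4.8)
The plan is to read $\sum_{k\ge0}v^k$ as a \emph{geometric series} that converges in $R[[\xi^1,\dots,\xi^q]]$ with respect to the filtration by total polynomial degree, and then to confirm by the usual telescoping identity that its sum inverts $1-v$ on both sides. For a series $f=\sum_\mu f_\mu\xi^\mu$ write $\mathrm{ord}(f)$ for the least total degree $|\mu|=\mu_1+\cdots+\mu_q$ of a monomial occurring in $f$ (and $\mathrm{ord}(0)=+\infty$). Since the commutation relations $\xi^i\xi^j=\pm\,\xi^j\xi^i$ merely reorder monomials and preserve their total degree, one has $\mathrm{ord}(fg)\ge\mathrm{ord}(f)+\mathrm{ord}(g)$; and $R[[\xi^1,\dots,\xi^q]]$ is complete for the resulting filtration, since a formal series is precisely a compatible choice of its homogeneous component in each total degree.

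First I would note that $\mathrm{ord}(v)\ge1$ by hypothesis, whence $\mathrm{ord}(v^k)\ge k$ for every $k\ge0$. Thus, for a fixed multi-index $\mu$, only the summands with $k\le|\mu|$ contribute a monomial $\xi^\mu$ to $\sum_k v^k$, so the partial sums $s_N:=\sum_{k=0}^{N}v^k$ stabilise monomial by monomial: the coefficient of $\xi^\mu$ in $s_N$ is independent of $N$ as soon as $N\ge|\mu|$. This defines an element $w:=\sum_{k\ge0}v^k\in R[[\xi^1,\dots,\xi^q]]$ — equivalently, $w$ is the limit of the Cauchy sequence $(s_N)$, which exists by completeness.

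Next I would verify $(1-v)\,w=w\,(1-v)=1$. From the telescoping identity $(1-v)s_N=s_N(1-v)=1-v^{N+1}$ together with $\mathrm{ord}(v^{N+1})\ge N+1$, the series $(1-v)s_N$ and $1$ agree in every total degree $\le N$; comparing the coefficient of an arbitrary $\xi^\mu$ (legitimate by the previous paragraph, with $N\ge|\mu|$) yields $(1-v)w=1$, and symmetrically $w(1-v)=1$. Non-commutativity of $R[[\xi^1,\dots,\xi^q]]$ is harmless here because $1-v$ commutes with every power $v^k$, hence with their limit $w$. Therefore $1-v$ is invertible, with two-sided inverse $\sum_{k\ge0}v^k$.

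The one point that deserves care — and it is a mild one — is to give the geometric series a precise meaning inside the formal power series algebra; this is exactly what the order filtration and the completeness observation above provide, after which only the routine telescoping bookkeeping remains.
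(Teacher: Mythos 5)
Your proof is correct and follows essentially the same route as the paper's: both rest on the observation that $v^k$ has order at least $k$ in the total-degree filtration, so that each coefficient of $\sum_{k\ge 0}v^k$ is a finite sum in $R$, after which the telescoping identity $(1-v)\sum_k v^k=1$ closes the argument. Your version merely makes the convergence bookkeeping (partial sums, stabilisation of coefficients, two-sidedness) slightly more explicit than the paper does.
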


\begin{proof} Observe first that, for any $k\in\N$,
$$
v^k= \sum_{|\zn|\geq k} \lp \sum_{\zm_1+ \ldots + \zm_k=\zn}\pm v_{\zm_1}\cdots v_{\zm_k}  \rp \xi^{\zn}\;,
$$
where the $\zm_i\in \N^q$ are of course multi-indices. It follows that the coefficients of $v^{-1}:=\sum_{k\geq0} v^k$ are finite sums in $R$, so that $v^{-1}\in R[[\xi^1,\ldots,\xi^q]]$. It suffices now to observe that

$$
(1-v)\sum_{k\geq0} v^k = \sum_{k\geq0} v^k - \sum_{k\geq1} v^k = 1\;.
$$
\end{proof}

\begin{lem}\label{lem:FPSinvert}
A series $w\in R[[\xi^1,\ldots,\xi^q]]$ is invertible if and only if its independent term $w_0$ is invertible in $R$.
\end{lem}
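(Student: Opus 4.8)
The plan is to deduce both implications from the previous lemma by means of the factorization $w = w_0\,(1-v)$, where $w_0$ is the independent term of $w$.

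\emph{Necessity.} The key observation is that the ideal $\frak{J}$ of $R[[\xi^1,\ldots,\xi^q]]$ generated by $\xi^1,\ldots,\xi^q$ is two-sided: since the coefficients from $R$ are central and the parameters $\Z_2^n$-commute, one has $\xi^a r = r\,\xi^a$ and $\xi^a\xi^b = \pm\,\xi^b\xi^a$, so left and right multiples of the $\xi^a$ agree up to a sign. Consequently the map $\frak{p}\colon R[[\xi^1,\ldots,\xi^q]]\to R$ sending a series to its independent term is a unital ring homomorphism. If $w$ is invertible, say $wu=uw=1$, then applying $\frak{p}$ yields $w_0\,\frak{p}(u) = \frak{p}(u)\,w_0 = 1$, whence $w_0$ is invertible in $R$.

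\emph{Sufficiency.} Conversely, assume $w_0\in R$ is invertible. Since $w_0$ is central in $R[[\xi^1,\ldots,\xi^q]]$, we may write
$$ w \;=\; w_0 + \sum_{|\zm|>0} w_{\zm}\,\xi^{\zm} \;=\; w_0\,(1-v)\;,\qquad v := -\,w_0^{-1}\!\!\sum_{|\zm|>0} w_{\zm}\,\xi^{\zm}\;, $$
and $v$ has no independent term. By the previous lemma, $1-v$ is invertible, with two-sided inverse $\sum_{k\geq0} v^k$ (the identity $\sum_{k\geq0}v^k-\sum_{k\geq1}v^k=1$ being valid read on either side). As $w_0^{-1}\in R\subset R[[\xi^1,\ldots,\xi^q]]$, the series $w$ is a product of two invertible elements, hence invertible, with $w^{-1}=\bigl(\sum_{k\geq0}v^k\bigr)w_0^{-1}$.

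There is no real obstacle here; the only points needing care are that, in this not necessarily commutative ring, ``invertible'' is meant in the two-sided sense, that $\frak{J}$ must be checked to be a two-sided ideal so that $\frak{p}$ is multiplicative, and that the centrality of $w_0$ is precisely what makes the factorization $w=w_0(1-v)$ legitimate on either side.
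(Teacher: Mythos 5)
Your proof is correct and follows essentially the same route as the paper: necessity by reading off the independent term of a product (the paper simply says this "follows from the definition of the multiplication", while you spell out why the projection is a unital ring morphism), and sufficiency via the factorization $w=w_0(1-v)$ combined with the preceding lemma. The extra care you take about two-sidedness and the centrality of $w_0$ is sound but does not change the argument.
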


\begin{proof}
Necessity directly follows from the definition of the multiplication in $R[[\xi^1,\ldots,\xi^q]]$. Conversely, consider $w\in R[[\xi^1,\ldots,\xi^q]]$ with $w_0$ invertible: $w=w_0(1-v)$. In view of the preceding lemma, we then have $w^{-1}= w_0^{-1}\sum_{k\geq0} v^k$.
\end{proof}

We are now prepared to prove the

\begin{prop}\label{prop:domlocality} Any $\Z_2^n$-superdomain $(U,C^\infty_U[[\xi^1,\dots,\xi^q]])$ is a locally $\Z_2^n$-ringed space, i.e. for any $x\in U$, the stalk $\Ci_{U,x}[[\xi^1,\ldots,\xi^q]]$ has a unique maximal homogeneous ideal
$$
\frak{m}_x=\{[f]_x: f_0(x)=0\}\;.
$$
\end{prop}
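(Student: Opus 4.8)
The plan is to show directly that the proposed homogeneous ideal $\frak{m}_x$ is maximal, that it is the unique maximal homogeneous ideal, by establishing that every homogeneous element outside $\frak{m}_x$ is invertible in the stalk. First I would unwind the definition of the stalk: since the $\xi^a$ are global sections and smoothness is a local condition, the stalk $\Ci_{U,x}[[\xi^1,\ldots,\xi^q]]$ is canonically isomorphic to $R[[\xi^1,\ldots,\xi^q]]$ with $R=\Ci_{U,x}$, the ring of germs of smooth functions at $x$. Here the key classical fact, which I would invoke without reproving, is that $R=\Ci_{U,x}$ is itself a local ring whose unique maximal ideal is $\frak{n}_x=\{[f]_x:f(x)=0\}$; equivalently, a germ $[g]_x$ is invertible in $R$ iff $g(x)\neq 0$.

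Next I would verify that $\frak{m}_x$ is an ideal: it is clearly an $R$-submodule closed under multiplication by arbitrary series (the constant term of a product $wf$ has the form $w_0 f_0$, which vanishes at $x$ as soon as $f_0(x)=0$), and it is homogeneous because the condition $f_0(x)=0$ only constrains the degree-$0$ component and is stable under taking homogeneous parts. Then comes the heart of the argument: a homogeneous series $w\notin\frak{m}_x$ has $w_0(x)\neq 0$, hence $w_0$ is invertible in $R$ by the classical fact above, and therefore $w$ is invertible in $R[[\xi^1,\ldots,\xi^q]]$ by Lemma~\ref{lem:FPSinvert}. Consequently any homogeneous ideal strictly containing $\frak{m}_x$ must contain an invertible homogeneous element and hence be the whole stalk; this shows $\frak{m}_x$ is maximal among homogeneous ideals. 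For uniqueness, I would argue contrapositively: if $\frak{a}$ is any homogeneous ideal not contained in $\frak{m}_x$, it contains a homogeneous $w$ with $w_0(x)\neq 0$, which is invertible, so $\frak{a}$ is the whole ring; thus every proper homogeneous ideal lies inside $\frak{m}_x$, making it the unique maximal one.

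The one point that needs a little care, and which I regard as the main (mild) obstacle, is the homogeneity bookkeeping in the non-supercommutative $\Z_2^n$-setting: I must make sure that the maximality statement really is about \emph{homogeneous} ideals (the relevant notion for a locally $\Z_2^n$-ringed space), that the decomposition of an element into its $\Z_2^n$-homogeneous components behaves well with respect to $\frak{m}_x$, and that the invertibility produced by Lemma~\ref{lem:FPSinvert} is compatible with the grading — i.e. the inverse of a degree-$0$ series is again degree $0$, which follows since the inverse is built as $w_0^{-1}\sum_{k\ge0}v^k$ with $v$ a sum of strictly positive-degree terms divided by the degree-$0$ term $w_0$, hence of total degree $0$. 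Everything else is a routine transcription of the classical superdomain argument, with Lemmas \ref{lem:FPSinvert} and its predecessor supplying exactly the ingredient that nilpotency used to provide in ordinary supergeometry.
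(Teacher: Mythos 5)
Your proposal is correct and follows essentially the same route as the paper: both reduce invertibility in the stalk to invertibility of the independent term via Lemma~\ref{lem:FPSinvert}, identify $\frak{m}_x$ with the set of non-units, and deduce maximality and uniqueness from the fact that a proper homogeneous ideal cannot contain a unit. Your extra care about homogeneity and the degree of the inverse is a welcome (if routine) elaboration of what the paper dismisses as ``clearly''.
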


\begin{proof} Set $S_x=\Ci_{U,x}[[\xi^1,\ldots,\xi^q]]$. In view of Lemma \ref{lem:FPSinvert}, a series $[f]_x\in S_x$ is invertible if and only if $[f_0]_x\in\Ci_{U,x}$ is invertible, i.e. if and only if $f_0(x)\neq 0$:
$$
S_x\setminus S_x^{\times} =\{ [f]_x: f_0(x)=0 \}\;.
$$
The latter is clearly a proper homogeneous ideal. Let $I_x$ be any proper homogeneous ideal. If it strictly contains $S_x\setminus S_x^\times$, it contains an invertible element of $S_x$ and can thus not be proper: the homogeneous ideal $\frak{m}_x:=S_x\setminus S_x^\times$ is maximal. If $I_x$ is another maximal homogeneous ideal, it does not contain any invertible element: $I_x \subset \frak{m}_x\subset S_x$ -- a contradiction.\end{proof}

Moreover, Lemma \ref{lem:FPSinvert} has the following

\begin{cor}\label{p1} For any open $V\subset U$, a $\Z_2^n$-function $f\in {\cO}_U(V)=\Ci_U(V)[[\xi^1,\ldots,\xi^q]]$ is invertible in ${\cO}_U(V)$ if and only if its independent term $f_0$ is invertible in $\Ci_U(V)$.
\end{cor}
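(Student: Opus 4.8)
The plan is to reduce the statement directly to Lemma~\ref{lem:FPSinvert} by exhibiting, for each open $V\subset U$, the ring $\cO_U(V)=\Ci_U(V)[[\xi^1,\ldots,\xi^q]]$ as an instance of the algebra $R[[\xi^1,\ldots,\xi^q]]$ treated there, with $R=\Ci_U(V)$. The only thing to check is that $R=\Ci_U(V)$ is a commutative unital ring whose scalars sit centrally in $\cO_U(V)$ (true by construction of the polynomial/power-series sheaf) and that the $\Z_2^n$-grading and commutation rule~(\ref{comru}) on the $\xi^a$ are exactly the ones assumed in the lemma; both are immediate from the Definition of the $\Z_2^n$-superdomain. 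Hence Lemma~\ref{lem:FPSinvert} applies verbatim.

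First I would fix $V\subset U$ open and set $R=\Ci_U(V)$. Then I would invoke Lemma~\ref{lem:FPSinvert}: a series $f\in R[[\xi^1,\ldots,\xi^q]]$ is invertible iff its independent term $f_0$ is invertible in $R$. Since $\cO_U(V)=R[[\xi^1,\ldots,\xi^q]]$ as $\Z_2^n$-commutative associative unital $\R$-algebras, this is precisely the assertion of the corollary, so nothing further is needed. I would additionally recall, for the reader's convenience, the elementary characterisation of invertibility in $R=\Ci_U(V)$: a smooth function $f_0\in\Ci_U(V)$ is invertible in $\Ci_U(V)$ iff $f_0(x)\neq 0$ for every $x\in V$ (the reciprocal is then again smooth). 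Combining the two gives the down-to-earth restatement: $f$ is invertible in $\cO_U(V)$ iff $f_0(x)\neq 0$ for all $x\in V$.

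There is essentially no obstacle here: the corollary is a formal consequence of the already-proved lemma, and the proof amounts to identifying the relevant ring with the correct coefficient ring. The only mildly delicate point, if one wants to spell it out, is that inverting a nowhere-vanishing smooth function yields a smooth (not merely continuous) function, which is standard. One could also remark that the same argument localises: at a point $x\in U$ the stalk $\Ci_{U,x}$ has the same property, which is exactly what was used in Proposition~\ref{prop:domlocality}, so the corollary is the global companion of that local statement.
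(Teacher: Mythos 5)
Your proof is correct and is exactly the paper's intended argument: the corollary is stated as an immediate consequence of Lemma~\ref{lem:FPSinvert} applied with $R=\Ci_U(V)$, which is precisely your reduction. The extra remarks (nowhere-vanishing smooth functions have smooth reciprocals, and the parallel with the stalk-level statement in Proposition~\ref{prop:domlocality}) are accurate but not needed.
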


This corollary guarantees that a number of results of classical Supergeometry still hold in $\Z_2^n$-Supergeometry, although formal variables are no longer necessarily nilpotent.

\subsection{Completeness of $\Z_2^n$-function algebras}

The algebra $\cO(V)=\Ci(V)[[\xi^1,\ldots,\xi^q]]$ of formal power series is the completion of the algebra $\frak{O}(V)=C^\infty(V)[\zx^1,\dots,\zx^q]$ of polynomials. Moreover,

\begin{prop} The algebra $\cO(V)=\Ci(V)[[\xi^1,\ldots,\xi^q]]$ of $\Z_2^n$-functions on $V$ is Hausdorff-complete (in the sense of Section \ref{App1}).\end{prop}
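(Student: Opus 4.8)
The plan is to exhibit the obvious decreasing filtration on $\cO(V)=\Ci(V)[[\xi^1,\ldots,\xi^q]]$ by the ideals $\cJ^k$ consisting of those formal power series whose homogeneous components of polynomial degree $<k$ in the $\xi^a$ all vanish, and to check that this filtration makes $\cO(V)$ Hausdorff-complete in the sense of the appendix (Section \ref{App1}). Concretely, $\cJ^0=\cO(V)$ and $\cJ^k=\{f=\sum_\mu f_\mu(x)\xi^\mu : f_\mu=0 \text{ whenever } |\mu|<k\}$, so that $\cJ^k\cdot\cJ^l\subseteq\cJ^{k+l}$ and $\bigcap_k\cJ^k=\{0\}$; the latter intersection being trivial is precisely the Hausdorff property. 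Completeness then amounts to showing that every Cauchy sequence (equivalently, every sequence $(f^{(j)})$ with $f^{(j+1)}-f^{(j)}\in\cJ^{k_j}$ for some $k_j\to\infty$) converges in $\cO(V)$.

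First I would make the filtration–topology explicit, recalling from Section \ref{App1} that a sequence is Cauchy iff its consecutive differences eventually lie in arbitrarily deep members of the filtration, and that a series $\sum_j g^{(j)}$ with $g^{(j)}\in\cJ^{k_j}$, $k_j\to\infty$, is what one must sum. Next I would observe the crucial finiteness fact: since there are only finitely many multi-indices $\mu\in\N^q$ with $|\mu|=k$ for each fixed $k$ (and, for the odd $\xi^a$, the exponent is capped at $1$, which only reduces the count), the coefficient of any fixed monomial $\xi^\mu$ in $\sum_j g^{(j)}$ receives contributions from only finitely many terms $g^{(j)}$ — namely those with $k_j\le|\mu|$. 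Hence the formal sum $f:=\sum_j g^{(j)}$ is a well-defined element of $\cO(V)$: its $\mu$-coefficient is the finite sum in $\Ci(V)$ of the $\mu$-coefficients of the relevant $g^{(j)}$. One then checks that $f-\sum_{j\le J}g^{(j)}\in\cJ^{\min_{j>J}k_j}\to$ (deep), so the partial sums converge to $f$, giving completeness. For the Hausdorff part, if $f\in\bigcap_k\cJ^k$ then every coefficient $f_\mu(x)$ vanishes, so $f=0$.

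The routine remaining points are: verifying $\cJ^k\cJ^l\subseteq\cJ^{k+l}$ (immediate from $\xi^\mu\xi^\nu=\pm\xi^{\mu+\nu}$ and $|\mu+\nu|=|\mu|+|\nu|$, the sign being irrelevant to which filtration level the product lands in), and confirming that this filtration is the one induced by the topology under which $\cO(V)$ is the completion of the polynomial algebra $\frak O(V)$ — that identification was already asserted just before the proposition, so I would simply invoke it and note that $\frak O(V)=\bigcup_k(\text{polys of degree}<k)$ is dense because $f-\sum_{|\mu|<k}f_\mu\xi^\mu\in\cJ^k$.

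The main obstacle — really the only one that is not bookkeeping — is making sure the notion of Hausdorff-completeness from Section \ref{App1} is matched exactly, in particular whether that section phrases completeness via Cauchy sequences, via convergence of summable families, or via the canonical map $\cO(V)\to\varprojlim_k \cO(V)/\cJ^k$ being an isomorphism. The substance is the same in all three formulations and rests on the single finiteness observation above, but the write-up must be pinned to whichever formulation the appendix adopts; once that is fixed, the argument is the short one sketched here.
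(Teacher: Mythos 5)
Your proposal is correct and follows essentially the same route as the paper: the same filtration by the ideals $J^\ell$ of series containing at least $\ell$ formal parameters, and the same key finiteness observation that a compatible family of truncations (equivalently, a Cauchy sequence of partial sums) determines a unique formal series coefficient by coefficient. The only cosmetic difference is that Section \ref{App1} defines Hausdorff-completeness as bijectivity of the canonical map $\cO\to\varprojlim_\ell\cO/J^\ell$, which the paper verifies directly by identifying $\cO/J^\ell$ with the series truncated to at most $\ell-1$ parameters, whereas you argue via the equivalent Cauchy-sequence criterion and check the Hausdorff property $\bigcap_k J^k=\{0\}$ separately.
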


\begin{proof} Consider a $\Z_2^n$-superdomain with $\Z_2^n$-functions 
$$f(x,\xi)=\sum_{|\mu|=0}^{\infty} f_\mu(x) \zx^\mu\in \cO(V)=\Ci(V)[[\xi^1,\ldots,\xi^q]]\;.$$ 
The number $k:=|\zm|$ of generators defines an $\N$-grading in $\cO(V)$ that induces a decreasing filtration 
$\cO_\ell(V)=\Ci(V)[[\xi^1,\ldots,\xi^q]]_{\ge\ell}$, 
where subscript $\ge \ell$ means that we consider only series whose terms contain at least $\ell$ parameters $\xi^a$ 
(in the following we omit $V$ if no confusion arises). 
Of course $J=J^1:=\cO_1$ -- the kernel of the projection of $\Z_2^n$- onto base-functions -- is an ideal of $\cO$ 
and $J^\ell=\cO_\ell$: $$\cO\supset J\supset J^2\supset\ldots$$ 
The sequence $\cO/J\leftarrow \cO/J^2\leftarrow\cO/J^3\leftarrow\ldots\;$, which can be identified with the sequence 
$$\Ci\leftarrow \Ci[[\xi^1,\ldots,\xi^q]]_{\le 1}\leftarrow \Ci(V)[[\xi^1,\ldots,\xi^q]]_{\le 2}\leftarrow\ldots\;,$$ 
is an inverse system, whose limit is 
\be\label{Completeness}\varprojlim_\ell \cO/J^\ell=\cO\;.\ee 
This means that $\cO$ is Hausdorff-complete, see Section \ref{App1}.
\end{proof}

\begin{itemize}\item It is well known that Equation (\ref{Completeness}) means that $\cO$ is a complete topological algebra with respect to the topology in $\cO$ defined by the filtration $J^\ell,$ $\ell\ge 1$, viewed as a basis of neighborhoods of $0$. \item Remark that also the sequence $\frak{O}/\frak{J}\leftarrow \frak{O}/\frak{J}^2\leftarrow\frak{O}/\frak{J}^3\leftarrow\ldots\;$ can be identified with $\Ci\leftarrow \Ci[[\xi^1,\ldots,\xi^q]]_{\le 1}\leftarrow \Ci(V)[[\xi^1,\ldots,\xi^q]]_{\le 2}\leftarrow\ldots\;$ It follows that \be\label{Completeness2}\varprojlim_\ell \frak{O}/\frak{J}^\ell=\cO\;,\ee so that $\cO$ is actually the completion $\widehat{\frak O}$ of $\frak{O}$ with respect to the filtration implemented by $\frak J$ (as well as, in view of (\ref{Completeness}), its own completion with respect to $J$).\end{itemize}

\subsection{Morphisms of $\Z_2^n$-superdomains}\label{MorphTheo}

The following remark shows that morphisms of $\Z_2^n$-superdomains can be viewed as in classical differential Geometry. It will be formulated more rigorously in the case of general $\Z_2^n$-supermanifolds.\medskip

Consider two $\Z_2^n$-superdomains of dimension $p|\mathbf{q}$ and $p'|\mathbf{q}'$ over open subsets $U\subset\R^p$ and $U'\subset\R^{p'}$, respectively. Roughly, $\Z_2^n$-morphisms between these $\Z_2^n$-superdomains correspond to graded unital $\R$-algebra morphisms
$$\phi^*:\Ci(V')[[\zx'^1,\dots,\zx'^{q'}]]\to \Ci(V)[[\zx^1,\dots,\zx^{q}]]\;$$
and are determined by their coordinate form
\bea\label{mor1}
x'^i&=&\zf^i(x)+\sum_{\zs(\mu)=0} f^i_\mu(x) \zx^\mu\,,\\
\zx'^a&=&\sum_{\zs(\mu)=\zs_a} f^a_\mu(x) \zx^\mu\,,\nn
\eea
where the sums are formal series with coefficients in smooth functions and where $\zf:V\ni (x^1,\ldots,x^p)\mapsto (x'^1,\ldots,x'^{p'})\in V'$ is a smooth map.\medskip

\begin{example}\label{FundaExa2} 
In the case of $\Z_2^2$-superdomains of dimension $1|1|1|1$ with variables $(x,\xi,\zh,\zy)$ (resp., $(y,\za,\zb,\zg)$) of $\Z_2^2$-degrees $((0,0),(0,1),(1,0),(1,1))$, a $\Z_2^2$-morphism can be viewed as usual: 
$$\label{Morph}\left\{\begin{array}{c} y=\sum_r f^y_r(x)\zy^{2r}+\sum_r g^y_r(x)\zy^{2r+1}\xi\zh\;,\\ \za=\sum_r f^\za_r(x)\zy^{2r}\xi+\sum_r g^\za_r(x)\zy^{2r+1}\zh\;, \\ \zb=\sum_r f^\zb_r(x)\zy^{2r}\zh+\sum_r g^\zb_r(x)\zy^{2r+1}\xi\;, \\ \zg=\sum_r f^\zg_r(x)\zy^{2r+1}+\sum_r g^\zg_r(x)\zy^{2r}\xi\zh\;.\end{array}\right.\;$$
\end{example}\smallskip

To explain the above claim, we have to prove that any $\Z_2^n$-morphism has a coordinate form of the announced type (what is almost obvious), and that, conversely, any pullbacks $\phi^*(x'^i)\;(\simeq x'^i)$ and $\phi^*(\xi'^a)\;(\simeq \xi'^a)$ of the form (\ref{mor1}) uniquely extend to a $\Z_2^n$-morphism. We will show here that such a $\Z_2^n$-morphism does exist. Uniqueness (and other details) will be proven independently in the more general case of $\Z_2^n$-morphisms of $\Z_2^n$-supermanifolds.\medskip

In the sequel we write $\phi^*(x'^ i)=\zf^i(x)+j^i(x,\xi)$, with $j^i(x,\zx)=\sum_{\zs(\mu)=0} f^i_\mu(x) \zx^\mu\in J$. For any $$g(x',\xi')=\sum_{|\zn|\ge 0}g_\zn(x')\xi'^\zn\in \Ci(V')[[\zx'^1,\dots,\zx'^{q'}]]\;,$$ we set \be\label{Pullback}(\phi^*(g))(x,\xi)=\sum_{|\zn|\ge 0} \phi^*(g_\zn(x')) (\zvf^*(\xi'))^\zn\;,\ee where
\be\label{FormTayl} \phi^*(g_\zn(x'))=g_\zn(\phi^*(x'))=g_\zn(\zf(x)+j(x,\zx))= \sum_{|\za|\ge 0} \frac{1}{\za!}\; (\p ^{\za}_{x'}g_{\zn})(\zf(x))\; j^{\za}(x,\xi)\;\ee is a formal Taylor expansion; we use here the multiindex notation: $j^\za=(j^1)^{\za^1}\ldots (j^{p'})^{\za^{p'}}\in J^{|\za|}$. In fact the {\small RHS} of (\ref{FormTayl}) is a series of series and it could lead to rearranged series with non-converging series of $\Ci(V)$-coefficients. However, any type of monomial in the formal variables $\xi^a$ contains a fixed number $N$ of parameters. As the terms indexed by $|\za|>N$ contain at least $N+1$ parameters, they not contribute to the considered monomial. The coefficient of the latter is therefore a finite sum in $\Ci(V)$, so that the {\small RHS} of (\ref{FormTayl}) is actually a series in $\Ci(V)[[\zx^1,\dots,\zx^{q}]]$. The same argument can be used for the {\small RHS} of (\ref{Pullback}).\medskip

It is quite easily seen that the thus defined pullback map $\phi^*$ is a unital (obvious) graded $\R$-algebra morphism. As for the degree of $\phi^*$, note that $j^i$ is of degree $0$, so that $\zvf^*(g_\zn(x'))$ has $\Z_2^n$-degree $0$; Equation (\ref{Pullback}) allows now to see that $\zvf^*$ is of degree $0$. To prove that $\zvf^*$ is an algebra morphism, we first show that its restriction (\ref{FormTayl}) respects multiplication. If $g_\zn,h_\zr\in\Ci(V')$, we get 
\beas
\phi^*(g_\zn h_\zr)
&=&\sum_{\za} \frac{1}{\za!}\; \p ^{\za}_{x'}(g_{\zn}h_\zr)\; j^{\za} \\
&=&\sum_\za\sum_{\zb+\zg=\za} \frac{1}{\za!}\frac{\za!}{\zb!\zg!}\; \p^{\zb}_{x'}g_{\zn}\,\p^\zg_{x'}h_\zr\; j^{\zb+\zg}\\
&=&\sum_\zb\sum_\zg \frac{1}{\zb!\zg!}\; \p^{\zb}_{x'}g_{\zn}\,\p^\zg_{x'}h_\zr\; j^{\zb}j^\zg\\
&=&\zvf^*(g_\zn)\;\zvf^*(h_\zr)\;,
\eeas 
where we omitted for simplicity the evaluation at $\zf(x)$, as well as the variables of $j$ (remember that $j^i$ is of degree $0$). Let now $g=\sum_\zn g_\zn \xi'^\zn$ and $h=\sum_\zr h_\zr \xi'^\zr$ be two arbitrary $\Z_2^n$-functions: 
$$gh=\sum_{\za}\sum_{\zn+\zr=\za}\pm g_\zn h_\zr\xi'^\za\;,$$ 
where the sign is due to commutation of components of $\xi'$. Thus 
\beas
\phi^*(gh)&=&\sum_{\za} \phi^*\left(\sum_{\zn+\zr=\za}\pm g_\zn h_\zr\right) (\zvf^*(\xi'))^\za\\
&=&\sum_{\za} \sum_{\zn+\zr=\za}\zvf^*(g_\zn)\; \zvf^*(h_\zr)\; (\zvf^*(\xi'))^\zn\;(\zvf^*(\xi'))^\zr\;,
\eeas 
where the sign disappears as $\zvf^*$ is of degree $0$. The conclusion follows. \medskip

As mentioned above, the precise definition of a $\Z_2^n$-morphism as a morphism of locally $\Z_2^n$-ringed spaces will be given in Section $\ref{Z2nSuperMan}$, and the preceding explanation will be completed and generalized.

\section{$\Z_2^n$-Supermanifolds}\label{Z2nSuperMan}

\subsection{Definitions}\label{Definition}

A $\Z_2^n$-supermanifold is a locally $\Z_2^n$-ringed space ({\small LZRS}) that is locally modelled on a $\Z_2^n$-superdomain. For details on the category $\tt LZRS$ of {\small LZRS}, we encourage the reader to have a look at Section \ref{App2}. In the following, the elements of $\Z_2^n$, $n\in\N$, are considered as ordered {lexicographically}.

\begin{definition}[Ringed space definition]\label{DefZSupMan} A (smooth) \emph{$\Z_2^n$-supermanifold} $\cM$ of dimension $p|\mathbf{q}$, $p\in\N$, $\mathbf{q}=(q_1,\ldots,q_{2^n-1})\in \N^{2^n-1}$, is a {\small LZRS} $(M,\cA_M)$ that is locally isomorphic to a $\Z_2^n$-superdomain $\Ci_{\R^p}[[\xi^1,\ldots,\xi^q]]$, where $q=|\mathbf{q}|$, where $\xi^1,\ldots,\xi^q$ are formal variables of which $q_k$ have the $k$-th degree in $\Z_2^n\setminus\{0\}$, and where $\Ci_{\R^p}$ is the function sheaf of the Euclidean space $\R^p$. \end{definition}

The mentioned isomorphisms are of course invertible morphisms in $\tt LZRS$.\medskip

Many geometric concepts can be glued from local pieces: they can be defined via a cover by coordinate systems, with specific coordinate transformations that satisfy the usual cocycle condition. The same holds for $\Z_2^n$-supermanifolds. Roughly, a $\Z_2^n$-supermanifold of dimension $p|\mathbf{q}$ can be viewed as a second-countable Hausdorff topological space $M$ surrounded by `a cloud of formal stuff', which is locally (with respect to the topology of $M$) described by coordinate systems $(x,\xi)$, where $x=(x^1,\ldots,x^p)\in U\subset \R^p$ is of degree $0$ (and can be viewed as a homeomorphism $x(m)\rightleftarrows m(x)$ between $U$ an open subset of $M$ -- which is often also denoted by $U$) and $\xi=(\xi^1,\ldots,\xi^q)$ are formal variables as in Definition \ref{DefZSupMan}; further, the coordinate transformations respect the $\Z_2^n$-degree and satisfy the cocycle condition.\medskip

The rigorous alternative definition of $\Z_2^n$-supermanifolds follows naturally from this idea. It is similar to the atlas description of a supermanifold \cite{DAL}.

\begin{definition} A \emph{chart} (or coordinate system) over a (second-countable Hausdorff) topological space $M$ is a {\small LZRS} $${\cal U}=(U,\Ci_{U}[[\xi^1,\ldots,\xi^q]]),\; U\subset\R^p, p,q\in\N\;,$$ together with a homeomorphism $\psi:U\to \psi(U)$, where $\psi(U)$ is an open subset of $M$.\end{definition}

Given two charts $({\cal U}_\za,\psi_\za)$ and $({\cal U}_\zb,\psi_\zb)$ over $M$, we will denote by $\psi_{\zb\za}$ the homeomorphism $$\psi_{\zb\za}:=\psi_\zb^{-1}\psi_\za:V_{\zb\za}:=\psi_\za^{-1}(\psi_\za(U_\za)\cap \psi_\zb(U_\zb))\to V_{\za\zb}:=\psi_\zb^{-1}(\psi_\za(U_\za)\cap \psi_\zb(U_\zb))\;.$$

Whereas in classical Differential Geometry the coordinate transformations are completely defined by the coordinate systems, in ($\Z_2^n$-)Supergeometry, they have to be specified separately.\medskip

\begin{definition} A \emph{coordinate transformation} between two charts $({\cal U}_\za,\psi_\za)$ and $({\cal U}_\zb,\psi_\zb)$ over $M$ is an {isomorphism} of {\small LZRS} $\Psi_{\zb\za}=(\psi_{\zb\za},\psi^*_{\zb\za}):{\cal U}_{\za}|_{V_{\zb\za}}\to {\cal U}_{\zb}|_{V_{\za\zb}},$ where the source and target are restrictions of `sheaves' (note that the underlying homeomorphism is $\psi_{\zb\za}$).

An \emph{atlas} over $M$ is a covering $({\cal U}_\za,\psi_\za)_\za$ by charts together with a coordinate transformation for each pair of charts, such that the usual {cocycle} condition $\Psi_{\zb\zg}\Psi_{\zg\za}=\Psi_{\zb\za}$ holds (appropriate restrictions are understood).\end{definition}

\begin{definition}[Atlas definition] A (smooth) \emph{$\Z_2^n$-supermanifold} $\cM$ is a second-countable Hausdorff topological space $M$ together with a preferred atlas $({\cal U}_\za,\psi_\za)_\za$ over it. \end{definition}

\subsection{Rationale}\label{Rationale}

Let us consider the case $n=2\,$, $p|q_1|q_2|q_3=1|1|1|1$, and assume for simplicity that the underlying topological space $M$ carries a smooth manifold structure (we prove later that the underlying topological space of any $\Z_2^n$-supermanifold carries a smooth structure). We use notation from Examples \ref{FundaExa} and \ref{FundaExa2}; in particular $(x,\xi,\zh,\zy)$ are of degree $((0,0),(0,1),(1,0),(1,1))$. A coordinate transformation $(x,\xi,\zh,\zy)\rightleftarrows (y,\za,\zb,\zg)$ is then of the form \be\label{CoordTransfo}(a)\left \{\begin{array}{c} y=\sum_r f^y_r(x)\zy^{2r}+\sum_r g^y_r(x)\zy^{2r+1}\xi\zh\\ \za=\sum_r f^\za_r(x)\zy^{2r}\xi+\sum_r g^\za_r(x)\zy^{2r+1}\zh \\ \zb=\sum_r f^\zb_r(x)\zy^{2r}\zh+\sum_r g^\zb_r(x)\zy^{2r+1}\xi \\ \zg=\sum_r f^\zg_r(x)\zy^{2r+1}+\sum_r g^\zg_r(x)\zy^{2r}\xi\zh\end{array}\right.\;(b)\left\{\begin{array}{c} x=\sum_r F_r^x(y)\zg^{2r}+\sum_r G_r^x(y)\zg^{2r+1}\za\zb\\ \xi=\sum_r F_r^\xi(y)\zg^{2r}\za+\sum_r G_r^\xi(y)\zg^{2r+1}\zb\\ \zh=\sum_r F_r^\zh(y)\zg^{2r}\zb+\sum_r G_r^\zh(y)\zg^{2r+1}\za\\ \zy=\sum_r F_r^\zy(y)\zg^{2r+1}+\sum_r G_r^\zy(y)\zg^{2r}\za\zb\end{array}\right.\;\ee where the functions in $x$ and $y$ are smooth.

The substitution of (\ref{CoordTransfo})(a) in a local function 
\be\label{LocFun}f(y,\za,\zb,\zg)=\sum_{ i,j\in\{0,1\},\;r\in\N} f_{ijr}(y)\za^i\zb^j\zg^r\;\ee 
leads to a function $g(x,\xi,\zh,\zy)$ in the initial variables -- the pullback of $f$. As mentioned before, to transform 
$$\label{Tay1}f_{ijr}\left(\sum_r f_r^y(x)\zy^{2r}+\sum_r g_r^y(x)\zy^{2r+1}\xi\zh\right)\;,$$
we detach the independent term $f_0^y(x)$ from the series $j(x,\xi,\zh,\zy)$ of all the remaining terms and set 
$$\label{Tay2}f_{ijr}\left(f_0^y(x)+j(x,\xi,\zh,\zy)\right)=\sum_{n}\frac{1}{n!}\;\,\frac{\op{d}^n f_{ijr}}{\op{d}y^n}(f_0^y(x))\;j^n(x,\xi,\zh,\zy)\;\,$$
(remember that according to our simplifying assumptions $f_{ijr}$ is a function of a unique variable $y$).

It is now quite obvious that a coordinate transformation (\ref{CoordTransfo}) in a $\Z_2^n$-supermanifold induces a coordinate transformation $y=f_0^y(x)$, $x=F_0^x(y)$ in the base manifold. Indeed, since the transformations (\ref{CoordTransfo}) are inverse, we get $x$ when substituting (\ref{CoordTransfo})(a) in 
$$x=\sum_r F_r^x(y)\zg^{2r}+\sum_r G_r^x(y)\zg^{2r+1}\za\zb\;,$$ 
i.e. all the terms of the {\small RHS} that contain, after substitution, at least one parameter cancel, whereas the unique parameter independent term $F_0^x(f_0^y(x))$ coincides with $x$. Similarly, $f_0^y(F_0^x(y))=y\,.$\medskip

From the `atlas standpoint', a global $\Z_2^2$-superfunction $f\in\cA_M^{(i,j)}(M)$ of degree $(i,j)\in\Z_2^2$, is a family $f(y,\za,\zb,\zg)$, over all coordinate systems $(y,\za,\zb,\zg)$, of local functions of degree $(i,j)$, such that, when substituting (\ref{CoordTransfo})(a) in $f(y,\za,\zb,\zg)$, we get the function $f(x,\xi,\zh,\zy)$ associated to the coordinate system $(x,\xi,\zh,\zy)$ -- just as a global function $g\in\Ci_M(M)$ is a family $g(y)$, over the induced coordinate systems $(y)$, such that when substituting $y=f_0^y(x)$ in $g(y)$, we get $g(x)$. The degree $(i,j)$ is compatible with the coordinate transformations as the latter respect the degrees.\medskip

For any global $\Z_2^2$-superfunction in $\cA_M(M)$, i.e. any family of `glueable' local functions $f(y,\za,\zb,\zg)$, see (\ref{LocFun}), the induced family $f_{000}(y)$ defines a global base function in $\Ci_M(M)$. Indeed, in view of what has been said, it is easily checked that the gluing property of the family $f(y,\za,\zb,\zg)$ entails that $f_{000}(f_0^y(x))=f_{000}(x)\,.$

\begin{remark}\label{RemRat} This means that the canonical projections of the local expressions of a global function glue to give a global base function. In particular projection commutes with restriction. Therefore, projection is an algebra morphism.\end{remark}

\section{Examples}

 \begin{example} For $n=1$, we recover classical supermanifolds. Indeed, in this case there are no formal variables that bear powers higher than 1 and formal series are thus just polynomials.
\end{example}

\begin{example} We already mentioned that the tangent bundle $\sss T \cM$ to a $\Z_2$-supermanifold $\cM=(M,\cA_M)$ gives rise to a $\Z_2^2$-supermanifold. Indeed, the parities of local coordinates $(x^i,\zx^a)$ on $\cM$ induce canonically parities of the adapted system of coordinates $(x^i,\zx^a,\dot x^j,\dot \zx^b)$ on ${\sss T \cM}$ in which
$(x^i,\dot x^j)$ are even and $(\zx^a,\dot\zx^b)$ are odd. But $\sss T \cM$ is also a vector bundle what induces an additional $\N$-grading in which $(\dot x^j,\dot \zx^b)$ are of degree 1.
Using the canonical monoid homomorphism from $\N$ to $\Z_2$, we get a $\Z_2^2$-grading in which
$(x^i,\zx^a,\dot x^j,\dot \zx^b)$ have the bidegrees $((0,0),(0,1),(1,0),(1,1))$.
We can find an atlas whose coordinate changes respect the bidegrees; hence, we deal with a $\Z_2^2$-supermanifold. As the changes of coordinates are linear in $(\dot x^j,\dot \zx^b),$ the algebra of $\Z_2^2$-superfunctions which are polynomial in the latter variables is well-defined.
It can be identified with the algebra $\zW_D({\cM})$ of Deligne super differential forms on $\cM$.
Since it is dense in the whole algebra of $\Z_2^2$-superfunctions on $\sss T\cM$, the latter can be identified with the corresponding completion
$$\widehat{\zW}_D(\cM)=\prod_{k\ge 0}\w^k\zW^1\,.$$

\begin{prop} The tangent bundle $\sss T\cM$ of a $\Z_2$-supermanifold $\cM=(M,\cA_M)$, interpreted as ringed space $(M,\widehat{\zW}_D(\cM))$, is a $\Z_2^2$-supermanifold.
\end{prop}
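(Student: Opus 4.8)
The plan is to verify the three ingredients of the ringed-space Definition~\ref{DefZSupMan} with $n=2$: that $(M,\widehat{\zW}_D(\cM))$ is a $\Z_2^2$-ringed space over a second-countable Hausdorff space, that its stalks are local rings, and that it is locally isomorphic to a $\Z_2^2$-superdomain.

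First I would recall that, for a supermanifold $\cM$, the sheaf $\widehat{\zW}_D(\cM)=\prod_{k\ge0}\w^k\zW^1$ of completed Deligne superforms carries the bigrading by \emph{(form degree mod $2$, super parity)}: on the coordinates $x^i,\zx^a$ of $\cM$ this bidegree equals $(0,0)$ and $(0,1)$, while on the generators $\dot x^j:=dx^j$ and $\dot\zx^b:=d\zx^b$ of $\zW^1$ it equals $(1,0)$ and $(1,1)$. The Deligne sign convention is precisely the sign rule of the `scalar product' $\la(m,n),(k,l)\ra=mk+nl$ on $\Z_2^2$ recalled in the introduction (note that $(-1)^{mk}$ depends only on $m,k$ mod $2$), so $\widehat{\zW}_D(\cM)$ is a sheaf of $\Z_2^2$-commutative associative unital $\R$-algebras; and $M$ is second-countable Hausdorff because $\cM$ is a supermanifold. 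This settles the `$\Z_2^2$-ringed space' part.

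Next I would pin down the local model. Over a coordinate chart $U$ of $\cM$ with coordinates $(x^i,\zx^a)$, $i=1,\dots,p$, $a=1,\dots,q$, one has $\cA_M|_U\cong\Ci_U[\zx^1,\dots,\zx^q]$ and $\zW^1|_U$ is free over $\cA_M|_U$ on the $p$ odd generators $\dot x^j$ (bidegree $(1,0)$) and the $q$ even generators $\dot\zx^b$ (bidegree $(1,1)$); hence, for each fixed $k$, the $\Z_2^2$-graded exterior power $\w^k\zW^1|_U$ is free of \emph{finite} rank over $\cA_M|_U$, namely on the monomials $(\dot x)^A(\dot\zx)^B$ with $A\subseteq\{1,\dots,p\}$, $B\in\N^q$ and $|A|+|B|=k$. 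Forming the product over all $k$ then amounts exactly to allowing arbitrary powers of the even variables $\dot\zx^b$ while keeping the $\dot x^j$ and the $\zx^a$ nilpotent, so, rewriting the resulting formal sums, one obtains an isomorphism of sheaves on $U$
$$\widehat{\zW}_D(\cM)|_U\;\cong\;\Ci_U[[\zx^1,\dots,\zx^q,\dot x^1,\dots,\dot x^p,\dot\zx^1,\dots,\dot\zx^q]]_\zs\,,$$
the structure sheaf of a $\Z_2^2$-superdomain of dimension $p|q|p|q$ (the nonzero degrees $(0,1)<(1,0)<(1,1)$ occurring with multiplicities $q$, $p$, $q$). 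Together with Proposition~\ref{prop:domlocality} this simultaneously shows that every stalk of $\widehat{\zW}_D(\cM)$ is local, with the maximal homogeneous ideal described there, and that $(M,\widehat{\zW}_D(\cM))$ is locally modelled on a $\Z_2^2$-superdomain, which is Definition~\ref{DefZSupMan} for $n=2$. From the atlas viewpoint the picture is the same: a coordinate change $x'^i=\zf^i(x,\zx)$, $\zx'^a=\zf^a(x,\zx)$ of $\cM$ induces on $\sss T\cM$ the change obtained by applying the de Rham differential, $\dot x'^i=\sum_j(\pa_{x^j}\zf^i)\dot x^j+\sum_a(\pa_{\zx^a}\zf^i)\dot\zx^a$ and likewise for $\dot\zx'^a$ (see (\ref{CoordTransfo}) for the case $p|\mathbf{q}=1|1|1|1$); a brief parity count shows that each summand carries the expected $\Z_2^2$-bidegree, so these transformations are of the form (\ref{mor1}), they become isomorphisms of {\small LZRS} by the morphism construction of Section~\ref{MorphTheo} (the formal Taylor expansion (\ref{FormTayl}) makes the pullback well defined), and the cocycle condition is inherited from the atlas of $\cM$.

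The one genuinely delicate point is the local identification of the previous paragraph: matching the \emph{completed} Deligne-form algebra $\prod_k\w^k\zW^1$ with a formal-power-series superdomain algebra, i.e. checking that the completion takes place exactly along the even, non-nilpotent bidegree-$(1,1)$ directions $\dot\zx^b$ (the recurring phenomenon of Section~3) and nowhere else, and that the local isomorphisms so obtained are compatible with the transition data and hence glue into a single {\small LZRS}. The sign-rule bookkeeping of the second paragraph and the parity and cocycle verifications of the atlas picture are routine and are essentially contained already in the discussion preceding Example~\ref{FundaExa2}.
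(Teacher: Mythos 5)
Your proposal is correct and follows essentially the same route as the paper: the paper likewise assigns the bidegrees $((0,0),(0,1),(1,0),(1,1))$ to $(x^i,\zx^a,\dot x^j,\dot\zx^b)$ by combining the super parity with the vector-bundle $\N$-grading reduced mod $2$, observes that the coordinate changes respect this bigrading, and identifies the $\Z_2^2$-function algebra with the completion $\widehat{\zW}_D(\cM)=\prod_{k\ge 0}\w^k\zW^1$ of the Deligne forms. You merely spell out in more detail the local ringed-space model $\Ci_U[[\zx,\dot x,\dot\zx]]$ of dimension $p|q|p|q$, which the paper leaves implicit in its atlas-style discussion.
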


\end{example}

\begin{example} Let now
$$\label{algebroid}
        {\xymatrix@R-1mm @C-1mm{ & E \ar[ld]_*{\zt_l} \ar[rd]^*{\zt_r} & \cr
        E_{10} \ar[rd]_*{\bar\zt_r} & E_{11} \ar[u] & E_{01} \ar[ld]^*{\bar\zt_l}  \cr & M  & }}
$$
be a double vector bundle with the side bundles $E_{01},E_{10}$ and the core bundle $E_{11}$.
This corresponds to a choice of two commuting Euler vector fields $\nabla_1,\nabla_2$ on $E$ \cite{GR09}. We can choose an atlas with bihomogeneous local coordinates, say $(x,\zx,\zh,\zy)$ of bidegrees $(0,0),(0,1),(1,0),(1,1)$, respectively. Moreover, all coordinate changes have the form
\beas
x'&=&\zf(x)\,,\\
\zx'&=&a(x)\zx\,,\\
\zh'&=&b(x)\zh\,,\\
\zy'&=&c(x)\zy+d(x)\zx\zh\,,
\eeas
and thus respect the bigrading. We can now `superize' assuming that these coordinates satisfy the sign rules of the `scalar product' in $\Z_2^2$. As the coordinate changes respect the bidegrees, this is consistent and leads to a $\Z_2^2$-supermanifold $\zP E$.
In the super case, we have to fix the ordering, as its change may result in changing the sign (see discussion in \cite{GR09}).\medskip

All this can be generalized to $n$-tuple vector bundles if we fix a lexicographic ordering in $\Z_2^n$ relative to an ordering of the corresponding Euler vector fields.

\begin{prop} The superization of an $n$-vector bundle, $n\ge 1$, is a $\Z_2^n$-supermanifold.\end{prop}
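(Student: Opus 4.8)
The plan is to generalize the double vector bundle computation verbatim. First I would recall that an $n$-vector bundle $E$ over $M$ is, by definition, a manifold equipped with $n$ pairwise commuting Euler vector fields $\nabla_1,\ldots,\nabla_n$; locally this gives an atlas with multihomogeneous coordinates $(x,(\xi_I)_{I})$, where $x$ has multidegree $0\in\N^n$ and the fiber coordinates $\xi_I$ are labelled by the nonzero multidegrees $I\in\{0,1\}^n\setminus\{0\}$ they carry (a coordinate of multidegree $I$ is linear in the $\nabla_j$-direction exactly for those $j$ with $I_j=1$). The key structural fact, which I would state as a lemma (or simply cite \cite{GR09}), is that the coordinate changes of such an atlas are \emph{multihomogeneous}: they respect the $\N^n$-grading, hence also the induced $\Z_2^n$-grading obtained via the canonical monoid morphism $\N\to\Z_2$ applied in each slot. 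Concretely, the new coordinate $\xi'_I$ of multidegree $I$ is a polynomial, with coefficients smooth in $x$, in the monomials $\xi^\mu$ whose total multidegree equals $I$.

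Next I would perform the superization: declare that in each coordinate chart the coordinates $(x,\xi)$ satisfy the $\Z_2^n$ sign rule (\ref{comru}) determined by the map $\sigma$ that assigns to the $a$-th fiber coordinate its $\Z_2^n$-multidegree. For a fixed chart this produces a $\Z_2^n$-superdomain $(U,\Ci_U[[\xi^1,\ldots,\xi^q]])$ in the sense of the definition above (here $q$ is the fiber dimension and $\mathbf q$ records how many fiber coordinates carry each nonzero degree). What must be checked is that the multihomogeneous coordinate changes, originally maps between ordinary manifold charts, lift to isomorphisms of $\Z_2^n$-superdomains. Since a change of coordinates sends $\xi'_I$ to a $\Z_2^n$-homogeneous polynomial of degree $I$ in the source variables and sends $x'$ to a smooth function of $x$ (the $x'$-component involves no fiber coordinates at all, as $x$ has degree $0$ and the change respects the grading — in the $n$-linear case it is literally $x'=\varphi(x)$), formula (\ref{mor1}) applies: by the discussion in Section \ref{MorphTheo}, any assignment of this coordinate form extends uniquely to a morphism of $\Z_2^n$-superdomains, and the cocycle condition $\Psi_{\gamma\beta}\Psi_{\beta\alpha}=\Psi_{\beta\alpha}$ for the lifted morphisms follows from the cocycle condition for the underlying multihomogeneous coordinate changes together with the uniqueness of the extension (a morphism of $\Z_2^n$-superdomains is determined by the pullbacks of the coordinates, so two morphisms agreeing on coordinates agree). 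This yields a preferred atlas over $M$, i.e. a $\Z_2^n$-supermanifold in the atlas sense, which one denotes $\Pi E$.

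I would then translate this into the ringed-space language: the local models $\Ci_U[[\xi]]_\sigma$ glue along the lifted transition isomorphisms into a sheaf $\cA_M$ of $\Z_2^n$-commutative associative unital $\R$-algebras on $M$, and Proposition \ref{prop:domlocality} guarantees that each stalk is local with maximal homogeneous ideal $\frak m_x$, so $(M,\cA_M)$ is a locally $\Z_2^n$-ringed space locally modelled on a $\Z_2^n$-superdomain — precisely Definition \ref{DefZSupMan}. The case $n=1$ recovers ordinary vector bundles with reversed parity on the fibers, consistent with the first example; the case $n=2$ recovers the double vector bundle example $\Pi E$ treated just above, of which the statement is the evident multi-index generalization.

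\textbf{Main obstacle.} The genuinely delicate point, as the authors flag in the parenthetical remark of the $n=2$ example, is the \emph{ordering ambiguity}: in the $\Z_2^n$-commutative (non-supercommutative) setting even nonzero-degree generators need not commute, so a monomial like $\xi\eta$ versus $\eta\xi$ differs by a sign, and one must fix, once and for all, a linear order on the generators compatible with a chosen lexicographic order on $\Z_2^n$ (itself tied to the chosen order $\nabla_1,\ldots,\nabla_n$ of the Euler vector fields). The content of the proof is to check that the multihomogeneous transition maps remain well defined and still satisfy the cocycle condition after this choice — i.e. that reordering constants introduced by the sign rule (\ref{comru}) are globally consistent. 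This is where invoking the uniqueness half of the $\Z_2^n$-supermorphism construction of Section \ref{MorphTheo} does the real work, reducing the sign bookkeeping to the already-verified fact that pullbacks of the form (\ref{mor1}) extend consistently; the remaining verification that composition of two such extensions is again the extension of the composite coordinate change (hence the cocycle identity) is then routine, modulo care with the formal Taylor expansion (\ref{FormTayl}), whose convergence in $\Ci(V)[[\xi]]$ was established there by the finiteness-of-parameters argument.
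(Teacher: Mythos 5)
Your proposal follows essentially the same route as the paper: the paper's argument is precisely the double vector bundle computation (multihomogeneous atlas from commuting Euler vector fields, coordinate changes respecting the multidegree, superization via the sign rule, with the lexicographic-ordering caveat) followed by the remark that it generalizes verbatim to $n$-tuple vector bundles. Your write-up merely makes explicit the gluing and cocycle verifications that the paper leaves implicit, so it is a correct and somewhat more detailed rendering of the same argument.
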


Note that certain superizations of $n$-tuple vector bundles have been considered also by Voronov.
\end{example}

\section{Morphisms of $\Z_2^n$-supermanifolds}

\subsection{Embedding of the smooth base manifold}

We already mentioned that global $\Z_2^n$-functions project consistently to the base, see Remark \ref{RemRat}. In the present section, we make this observation more precise.

\begin{prop} The base topological space $M$ of any $\Z_2^n$-supermanifold $\cM=(M,\cA_M)$ of dimension $p|\mathbf{q}$ carries a smooth manifold structure of dimension $p$, and there exists a short exact sequence of sheaves $$0\to {\cal J}_M\to \cA_M\stackrel{\ze}{\to }\Ci_M\to 0\;.$$\end{prop}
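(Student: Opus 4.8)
The plan is to work locally, using the atlas description of $\cM$, and then to glue. First I would fix an atlas $(\cU_\za,\psi_\za)_\za$ with coordinate transformations $\Psi_{\zb\za}$, where $\cU_\za=(U_\za,\Ci_{U_\za}[[\xi^1,\dots,\xi^q]])$. On each chart define the projection $\ze_{U_\za}:\Ci_{U_\za}[[\xi^1,\dots,\xi^q]]\to\Ci_{U_\za}$ sending a formal series to its independent term $f_0$; this is exactly the projection $\frak{p}$ of Section 3.1, and it is a morphism of sheaves of $\R$-algebras with kernel the ideal sheaf $\cJ_{U_\za}$ generated by the formal variables (equivalently $\cJ_{U_\za}=\cO_1$ in the filtration notation of the completeness subsection). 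The computation already carried out in the Rationale (that all terms containing at least one parameter cancel, and the parameter-independent term of the substituted series is $F_0^x\circ f_0^y$) shows, in general, that the diagram relating $\ze_{U_\za}$, $\ze_{U_\zb}$, the pullback $\psi^*_{\zb\za}$ and the induced smooth map $\zf_{\zb\za}:=$ (independent term of $\psi^*_{\zb\za}(x')$) commutes: $\ze\circ\psi^*_{\zb\za}=\zf^*_{\zb\za}\circ\ze$. Hence the maps $\zf_{\zb\za}$ satisfy the cocycle condition (because the $\Psi_{\zb\za}$ do and $\ze$ is surjective), so they endow $M$ with the structure of a smooth manifold of dimension $p$, with $\Ci_M$ as its structure sheaf.

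Next I would assemble the short exact sequence. The local projections $\ze_{U_\za}$ are compatible with the coordinate transformations by the commuting diagram just described, so by the gluing/sheaf-morphism criterion for $\tt LZRS$ (Remark~\ref{RemRat} already records that projection commutes with restriction, hence is a genuine sheaf morphism) they glue to a global morphism of sheaves of $\R$-algebras $\ze:\cA_M\to\Ci_M$. Surjectivity of $\ze$ is immediate locally — a smooth base function is its own independent term — and surjectivity is a stalkwise/local condition, so $\ze$ is an epimorphism of sheaves. Define $\cJ_M:=\ker\ze$; this is a sheaf of ideals, locally equal to $\cJ_{U_\za}$. Then $0\to\cJ_M\to\cA_M\xrightarrow{\ze}\Ci_M\to 0$ is exact by construction.

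For the manifold structure the only slightly delicate points are: (i) checking that the transition maps $\zf_{\zb\za}$ are genuine smooth diffeomorphisms between open subsets of $\R^p$ — this follows since $\Psi_{\zb\za}$ is invertible with inverse $\Psi_{\za\zb}$, and applying $\ze$ to $\Psi_{\zb\za}\Psi_{\za\zb}=\mathrm{id}$ (using the just-proved functoriality $\ze\circ\psi^*_{\zb\za}=\zf^*_{\zb\za}\circ\ze$) gives $\zf_{\zb\za}\circ\zf_{\za\zb}=\mathrm{id}$; (ii) that the resulting smooth structure is independent of the chosen atlas, which again reduces to the same compatibility computation applied to a common refinement. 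I would also note that $M$ is second-countable and Hausdorff by hypothesis in the definition of a $\Z_2^n$-supermanifold, so no point-set pathology arises.

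\textbf{Main obstacle.} The genuinely substantive point — and the one I would expect to need the most care — is establishing the naturality square $\ze\circ\psi^*_{\zb\za}=\zf^*_{\zb\za}\circ\ze$ in full generality, i.e. for arbitrary $\Z_2^n$ and arbitrary coordinate transformations of the form (\ref{mor1}), not just in the illustrative $1|1|1|1$ case of the Rationale. Concretely one must show that for a morphism given in coordinates by $x'^i=\zf^i(x)+j^i(x,\xi)$ with $j^i\in J$, the independent term of $\psi^*_{\zb\za}(g)$ equals $g_0\circ\zf$, i.e. that the formal Taylor expansion (\ref{FormTayl}) contributes nothing to the $\xi$-independent part beyond the leading term $g_\zn(\zf(x))$ for $\zn=0$. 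This is precisely the observation that every term with $|\za|\ge 1$ in (\ref{FormTayl}), and every term with $|\zn|\ge 1$ in (\ref{Pullback}), lies in the ideal $J$; combined with the fact (already proved in Section 3.5) that the pullback $\phi^*$ is a well-defined graded algebra morphism preserving $J$, this gives $\ze\circ\phi^*=\zf^*\circ\ze$ cleanly. Once this is in hand, everything else is bookkeeping: the cocycle condition transports along $\ze$, the local projections glue, and the short exact sequence drops out.
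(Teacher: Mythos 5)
Your proof is essentially correct, but it takes a genuinely different route from the paper's. You work in the atlas picture: define $\ze$ chart by chart as ``take the independent term,'' verify the naturality square $\ze\circ\psi^*_{\zb\za}=\zf^*_{\zb\za}\circ\ze$ for coordinate transformations, and glue. The paper instead characterizes $\ze_U(f)(m)$ \emph{intrinsically}: by Corollary \ref{p1}, for each $f$ and each point $m$ there is a unique $k\in\R$ such that $f-k$ is non-invertible in every neighborhood of $m$, and $\ze_U(f)(m)$ is defined to be that $k$. Since this description makes no reference to coordinates, chart-independence, compatibility with restrictions, and the fact that $\ze$ is a surjective algebra morphism all come for free; the image presheaf is then locally isomorphic to $\Ci_{\R^p}$ and generates the sheaf $\Ci_M$ giving the smooth structure. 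What the paper's approach buys is substantial: the same invertibility characterization immediately yields Proposition \ref{commepspsi} (commutation of $\ze$ with \emph{arbitrary} $\Z_2^n$-morphisms, not just coordinate transformations between superdomains), which is then the engine of the continuity results and the fundamental morphism theorem. What your approach buys is concreteness: the projection is visibly ``drop all terms containing a formal variable.''

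One point in your argument deserves a flag. Your naturality square is clean when the coordinate transformations are \emph{given} by the explicit formal-Taylor-expansion formulas (\ref{Pullback})--(\ref{FormTayl}) of Section \ref{MorphTheo} — then indeed every term beyond $g_0(\zf(x))$ lies in $J$. But in the atlas (or ringed-space) definition a coordinate transformation is an abstract isomorphism of locally $\Z_2^n$-ringed spaces, and at this stage of the paper it has \emph{not} yet been proved that every such morphism is of the form (\ref{Pullback}) (that is the content of the fundamental theorem, proved later and using the present proposition). Degree considerations alone give you $\psi^*_{\zb\za}(J')\subset J$, but they do not tell you what the independent term of $\psi^*_{\zb\za}(g_0(y))$ is for a general smooth $g_0$. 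So either you must restrict to atlases whose transition maps are specified in coordinate form, or you need precisely the invertibility trick of the paper to get $\ze\circ\psi^*=\zf^*\circ\ze$ for abstract morphisms. This is a fillable but real gap in the blind version of your argument, and it is exactly the point the paper's intrinsic definition of $\ze$ is designed to sidestep.
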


\begin{proof} Let $V\subset\R^{p}$ be open, let $f \in \cO(V)=\Ci(V)[[\xi^1,\ldots,\xi^q]]$, let $x\in V$ and $k\in\R$. In view of Corollary \ref{p1}, the $\Z_2^n$-function $f-k$ is not invertible, in any neighborhood of $x$ in $V$, if and only if its independent term $f_0-k$ is not invertible, in any neighborhood of $x$, i.e. if and only if $k=f_0(x)$. Hence, for any $V\subset\R^p$, any $f\in\cO(V)$ and any $x\in V$, there exists a unique $k\in \R$, such that $f-k$ is not invertible, in any neighborhood of $x$ in $V$. Since $\cA_M$ is locally isomorphic to $\cO_{\R^p}$, the same property holds in $\cA_M$. For any open $U\subset M$, for any $f\in\cA(U)$ and any $m\in U$, the unique $k\in\R$ such that $f-k$ is not invertible, in any neighborhood of $m$, is denoted by $\ze_U(f)(m)$. If $m$ runs through $U$, we obtain a function $\ze_U(f):U\to \R$, and if $f$ runs through $\cA(U)$, we get a map
$\eps_{U} : \cA(U)\to \cF (U)$, where ${\cal F}(U)=\op{im}\ze_U$ is the algebra of these functions. Actually $\ze_U$ is a surjective algebra morphism and the short sequence of algebras
$$ 0 \raa  \cJ(U)  \raa  \cA(U)\raa {\cF}(U) \raa   0\;,$$  where $\cJ(U)=\ker \eps_{U},$ is exact. In fact $\cJ_M:U\to \cJ(U)$ is a subsheaf of $\cA_M$. On the other hand, it is clear that the presheaf ${\cal F}_M$ is locally isomorphic to $\Ci_{\R^p}$ and is thus locally a sheaf. Hence, ${\cal F}_M$ generates a sheaf $\frak{F}_M$ which is locally isomorphic to $\Ci_{\R^p}$ and thus implements a $p$-dimensional differential manifold structure on $M$ such that $\Ci_M\simeq \frak{F}_M$. Since the sequence of sheaves $$ 0 \raa \cJ_M \raa \cA_M \stackrel{\ze}{\raa} \Ci_M \raa  0 \;$$ is exact, we have $\cA_M/{\cal J}_M\simeq \Ci_M$. For details on sheaves, we refer the interested reader to Section \ref{App3}. See also \cite{Var}.\end{proof}


\subsection{Continuity of morphisms}\label{ContMorph}

In the `ringed space definition' of $\Z_2^n$-supermanifolds the requirement that $(M,\cA_M)$ be local is actually redundant -- in view of the local model. The unique maximal homogeneous ideal of $\cA_m$, $m\in M$, will be denoted by $\frak{m}_m$.

The key-fact about morphisms of $\Z_2^n$-supermanifolds is a generalization of Section \ref{MorphTheo}, see below. This result can be proved due to the continuity of morphisms with respect to the topologies induced by the ideals ${\cal J}(U)\subset\cA(U)$ and $\frak{m}_m\subset \cA_m$.

In this section, we prove these continuities.

\newcommand{\cB}{{\cal B}}

\begin{definition} A \emph{morphism of $\Z_2^n$-supermanifolds} or \emph{$\Z_2^n$-morphism} is a morphism of the underlying locally $\Z_2^n$-ringed spaces.\end{definition}

This means that the category ${\tt ZSMan}$ of $\Z_2^n$-supermanifolds is a full subcategory of the category ${\tt LZRS}$, see Section \ref{App2}.\medskip

We first show that $\Z_2^n$-morphisms commute with the projections $\ze$ onto the bases:

\begin{prop} \label{commepspsi}
Let $$ \Psi=(\psi,\psi^*) : {\cal M} =(M,\cA_M)  \to  {\cal N} =(N,\cB_N) $$ be a morphism of $\Z_2^n$-supermanifolds, let $V\subset N$ be an open subset, and $U=\psi^{-1}(V)$. Then,
$$ \ze_{U} \circ \psi^{*}_V = \psi^{*}_V \circ \ze_{V}\;, $$
 where the {\small LHS} pullback of $\Z_2^n$-functions is given by the second component of $\Psi$ and where the {\small RHS} pullback of classical functions is equal to $- \circ \psi|_U $ and thus given by the first component of $\Psi$.\end{prop}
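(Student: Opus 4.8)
The plan is to exploit the characterization of the base projection $\ze$ established in the previous proposition: for an open set $W$ and $m\in W$, $\ze_W(f)(m)$ is the \emph{unique} real number $k$ such that $f-k$ fails to be invertible in every neighborhood of $m$. So the strategy is to show that both composites $\ze_U\circ\psi^*_V$ and $\psi^*_V\circ\ze_V$ assign to a $\Z_2^n$-function $g\in\cB(V)$ and a point $m\in U$ precisely this distinguished scalar, hence agree. This reduces everything to tracking invertibility through the morphism $\psi^*$.

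The key steps are as follows. First I would fix $g\in\cB(V)$ and $m\in U$, and set $k:=\ze_V(g)(\psi(m))$, so that $g-k$ is non-invertible in every neighborhood of $\psi(m)$ in $V$ and $k$ is the unique such scalar. Since $\psi^*_V$ is a unital $\R$-algebra morphism, it sends units to units; but I need the converse direction, namely that $\psi^*$ \emph{reflects} non-invertibility at the point $m$ — equivalently, that $\psi^*_V(g-k)=\psi^*_V(g)-k$ is non-invertible in every neighborhood of $m$ in $U$. This is where the locally-$\Z_2^n$-ringed-space structure enters: the morphism $\Psi$ induces, on stalks, a \emph{local} homomorphism $\psi^*_m:\cB_{\psi(m)}\to\cA_m$, i.e. $(\psi^*_m)^{-1}(\frak{m}_m)=\frak{m}_{\psi(m)}$. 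Thus an element of $\cB_{\psi(m)}$ is non-invertible in the stalk (i.e. lies in $\frak{m}_{\psi(m)}$) if and only if its pullback lies in $\frak{m}_m$. Combined with Corollary \ref{p1} / Lemma \ref{lem:FPSinvert} — which say that non-invertibility in \emph{every} neighborhood of a point is the same as membership of the germ at that point in the maximal ideal — I conclude $\psi^*_V(g)-k$ is non-invertible near $m$. Then, by uniqueness of the distinguished scalar for $\psi^*_V(g)$ at $m$, we get $\ze_U(\psi^*_V(g))(m)=k=\psi^*_V(\ze_V(g))(m)$; here I also use that $\ze_V(g)$ is a classical smooth function on $V$ so its pullback along the continuous map $\psi|_U$ at $m$ is simply $\ze_V(g)(\psi(m))=k$, and a constant pulls back to itself. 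Letting $m$ range over $U$ gives the equality of functions, and letting $g$ range over $\cB(V)$ gives the asserted equality of maps.

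The main obstacle is the passage between the ``local'' condition defining $\ze$ (non-invertibility of $f-k$ in every neighborhood) and the stalk-level statement that $\psi^*_m$ is a local homomorphism of local rings. I would make this bridge explicit: $f-k$ is non-invertible in every neighborhood of $m$ $\iff$ the germ $[f-k]_m$ is not a unit in $\cA_m$ $\iff$ $[f-k]_m\in\frak{m}_m$ (using that $\cA_m$ is local with maximal ideal $\frak{m}_m$, by Proposition \ref{prop:domlocality} transported through the local model). The forward implication is essentially the definition of a germ being a unit; the reverse uses that a unit in the stalk has a representative that is a unit on some neighborhood — which follows from Corollary \ref{p1} applied to a formal-power-series representative on a coordinate chart. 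Once this dictionary is in place, the locality of the stalk map $\psi^*_m$ — which is part of what it means for $\Psi$ to be a morphism of \emph{locally} $\Z_2^n$-ringed spaces — does all the remaining work, and the proof is short. A minor point to handle carefully is naturality: that the germ of $\psi^*_V(g)$ at $m$ equals $\psi^*_m$ applied to the germ of $g$ at $\psi(m)$, which is just compatibility of $\psi^*$ with restriction maps, i.e. the sheaf-morphism axiom.
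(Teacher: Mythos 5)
Your proof is correct under the paper's definition of a morphism of locally $\Z_2^n$-ringed spaces, but it runs in the opposite direction from the paper's argument, and the difference matters. You fix $k:=\ze_V(g)(\psi(m))$ and try to push non-invertibility \emph{forward} through $\psi^*$, which forces you to invoke the locality axiom $\psi^*_m(\frak{m}_{\psi(m)})\subset\frak{m}_m$ (the requirement that stalk maps respect maximal ideals). The paper instead fixes $k':=\ze_U(\psi^*_V(g))(m)$, assumes for contradiction that $g-k'$ \emph{is} invertible near $\psi(m)$, and notes that a unital $\R$-algebra morphism sends units to units, so $\psi^*_V(g)-k'$ would be invertible near $m$ --- contradicting the definition of $\ze_U$. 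That direction needs nothing beyond unitality of $\psi^*_V$. The distinction is not cosmetic: the paper uses Proposition \ref{commepspsi} to prove Corollary \ref{Claim1}, i.e.\ that $\Z_2^n$-morphisms \emph{automatically} satisfy $\psi^*_m(\frak{m}_{\psi(m)})\subset\frak{m}_m$, and then remarks that this condition is redundant in the definition. If the proposition itself were proved your way, that corollary would become circular. So your argument proves the stated proposition, but the paper's lighter hypothesis is deliberate and buys the redundancy statement; your careful dictionary between ``non-invertible in every neighborhood of $m$'' and ``germ in $\frak{m}_m$'' (via Corollary \ref{p1} and Proposition \ref{prop:domlocality}) is sound and is also implicitly needed in the paper's version when it passes to Equation (\ref{MaxIdea}).
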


\begin{proof} Let $t\in \cB(V)$ and $m \in U$. If we set $s=\psi^{*}_V(t) \in \cA(U)$, we have to show that
$$ \ze_{U} (s) (m) = \ze_{V}(t)(\psi(m))\;. $$
The {\small RHS} of this equation is, by definition, the unique $k\in\R$ such that $t-k$ is not invertible, in any neighborhood of $\psi(m)$. It suffices thus to prove that the {\small LHS} has this property. Suppose that $t- \ze_{U} (s) (m)$ is invertible in some neighborhood of $\psi(m)$. Then, since $\psi^{*}_V$ is a unital $\R$-algebra morphism, $$\psi^{*}_V \lp t- \ze_{U} (s) (m) \rp=\psi^{*}_V (t) - \ze_{U} (s) (m)\; \psi^{*}_V(1) \\ = s - \ze_{U} (s) (m)$$ is invertible in some neighborhood of $m$ -- a contradiction.\end{proof}

\begin{cor} For any $\Z_2^n$-supermanifold ${\cal M} =(M,\cA_M)$ and any point $m\in M$, the unique maximal homogeneous ideal ${\frak m}_m$ of $\cA_m$ is given by \be\label{MaxIdea}{\frak m}_m=\{[f]_m:(\ze f)(m)=0\}\;.\ee\end{cor}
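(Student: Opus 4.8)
The plan is to reduce the statement to the local model and then transport the known picture along a local isomorphism. First I would use that $\cM$ is locally modelled on a $\Z_2^n$-superdomain: around $m$ there is an open set $W\subset M$ and an isomorphism of $\Z_2^n$-ringed spaces $\Phi=(\zf,\zf^*):(W,\cA_M|_W)\to(U,\cO_{U})$ onto a $\Z_2^n$-superdomain, normalised so that $\zf(m)=x\in U$. Since $\Phi$ is in particular a $\Z_2^n$-morphism, it induces on stalks an isomorphism of $\Z_2^n$-graded unital $\R$-algebras $\zf^*_x:\cO_{U,x}\to\cA_m$. By Proposition~\ref{prop:domlocality} the stalk $\cO_{U,x}$ is local and its unique maximal homogeneous ideal $\frak{m}_x$ is precisely the set of its non-invertible elements; as a ring isomorphism carries units to units, $\cA_m$ is local as well and its maximal homogeneous ideal equals $\frak{m}_m=\zf^*_x(\frak{m}_x)=\cA_m\setminus\cA_m^{\times}$.

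Next I would re-express $\frak{m}_x$ through the body map and push it across $\zf^*_x$. On the superdomain side, the proof establishing the smooth structure on the base already shows that, for a $\Z_2^n$-function $g$ defined near a point $y$ of a superdomain, the unique $k\in\R$ for which $g-k$ fails to be invertible on every neighbourhood of $y$ is the value $g_0(y)$ of the independent term, i.e. $(\ze g)(y)=g_0(y)$; combined with Proposition~\ref{prop:domlocality} this reads $\frak{m}_x=\{[g]_x:(\ze g)(x)=0\}$. To move this through $\zf^*_x$ I would apply Proposition~\ref{commepspsi} to $\Phi$, obtaining $\ze\circ\zf^*=\zf^*\circ\ze$ with $\zf^*$ acting on classical functions as $-\circ\zf$; evaluating germs at $m$, this says that for $[f]_m=\zf^*_x([g]_x)$ one has $(\ze f)(m)=(\ze g)(\zf(m))=g_0(x)$. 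Hence $[f]_m\in\frak{m}_m\Leftrightarrow[g]_x\in\frak{m}_x\Leftrightarrow g_0(x)=0\Leftrightarrow(\ze f)(m)=0$, which is exactly~(\ref{MaxIdea}); Remark~\ref{RemRat} guarantees that $(\ze f)(m)$ depends only on the germ $[f]_m$, so the formula is well posed.

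The argument contains no genuine computation, and the only care needed is the stalk-level bookkeeping: that a germ is a unit of the stalk precisely when it admits an invertible representative on some neighbourhood, that the local isomorphism really does descend to stalks and match up the two maximal homogeneous ideals, and that the construction of $\ze$ on superdomains agrees with the one in the base-manifold proposition. If one prefers to avoid the morphism language altogether, the same conclusion follows directly: $[f]_m$ is a unit of $\cA_m$ iff some representative $f$ is invertible on a neighbourhood of $m$, which by the very definition of $\ze$ happens iff $(\ze f)(m)\neq0$; since $\cA_m$ is local with $\frak{m}_m$ equal to its set of non-units (by the local model and Proposition~\ref{prop:domlocality}), this gives~(\ref{MaxIdea}) at once.
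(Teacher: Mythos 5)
Your proposal is correct and follows essentially the same route as the paper: transport the maximal homogeneous ideal of the local model (identified via Proposition \ref{prop:domlocality} with the germs whose independent term vanishes) through the stalk isomorphism induced by a local chart, and then use Proposition \ref{commepspsi} to rewrite the vanishing condition as $(\ze f)(m)=0$. The paper's own proof is just a terser version of exactly this argument, so nothing further is needed.
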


\begin{proof} If $$\Phi=(\zvf,\zvf^*):(U,\cA_M|_U)\stackrel{\sim}{\to}(V,\Ci_{\R^p}|_V[[\xi^1,\ldots,\xi^q]])$$ denotes an isomorphism in $\tt LZRS$, with $m\in U$, we have ${\frak m}_m=\zvf^*_m{\frak m}_x$, where $x=\zvf(m)$. It now suffices to apply Proposition \ref{commepspsi}.\end{proof}

As mentioned above, we need not assume that $(M,\cA_M)$ is local. Then $\Phi$ is only an isomorphism in $\tt ZRS$ and we cannot ask that $\zvf^*_m$ respects maximal ideals. However, since $\zvf^*_m$ is an isomorphism of graded unital $\R$-algebras, $\zvf^*_m{\frak m}_x$ \emph{is} the unique maximal homogeneous ideal of $\cA_m$.

The next result is the announced $\cJ$- and $\frak m$-continuity theorem for $\Z_2^n$-morphisms. It shows in particular that $\Z_2^n$-morphisms automatically respect maximal ideals, so that this requirement is actually redundant in the definition of $\Z_2^n$-morphisms.

\begin{cor}\label{Claim1} Any $\Z_2^n$-morphism $\Psi=(\psi,\psi^*):\cM=(M,\cA_M)\to \cN=(N,\cB_N)$ is continuous with respect to $\cal J$ and $\frak m$, i.e., for any open $V\subset N$ and any $m\in M$, we have
$$ \psi^{*}_V\lp \cJ_N(V) \rp \subset \cJ_M({\psi^{-1}(V)})\;\;\text{and}\;\; \psi^{*}_{m}\lp \frak{m}_{\psi(m)}\rp \subset \frak{m}_{m}\;.\label{stalkcond}$$
\end{cor}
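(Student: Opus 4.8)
The plan is to derive both inclusions from Proposition \ref{commepspsi} together with the explicit descriptions of $\cJ$ and $\frak m$ in terms of the base projection $\ze$. Recall that the short exact sequence $0\to\cJ_M\to\cA_M\stackrel{\ze}{\to}\Ci_M\to 0$ gives $\cJ_M(U)=\ker\ze_U$ for every open $U$, and that Corollary (Equation (\ref{MaxIdea})) identifies $\frak m_m=\{[f]_m:(\ze f)(m)=0\}$.

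\textbf{The $\cJ$-continuity.} Let $V\subset N$ be open, put $U=\psi^{-1}(V)$, and take $t\in\cJ_N(V)=\ker\ze_V$, so $\ze_V(t)=0$ as a function on $V$. Set $s=\psi^*_V(t)\in\cA_M(U)$. By Proposition \ref{commepspsi} we have $\ze_U(s)=\ze_U(\psi^*_V(t))=\psi^*_V(\ze_V(t))=\psi^*_V(0)=0$, where the last pullback is just precomposition with $\psi|_U$. Hence $s\in\ker\ze_U=\cJ_M(U)$, which is exactly the first inclusion.

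\textbf{The $\frak m$-continuity.} Fix $m\in M$ and set $n=\psi(m)$. Take a germ $[f]_n\in\frak m_n$; by (\ref{MaxIdea}) this means $(\ze f)(n)=0$. Choose a representative $f\in\cB_N(V)$ on some open neighborhood $V$ of $n$, and let $U=\psi^{-1}(V)$, a neighborhood of $m$. The stalk map $\psi^*_m$ is induced by the sheaf-level maps $\psi^*_V$, so $\psi^*_m[f]_n=[\psi^*_V(f)]_m$. Applying Proposition \ref{commepspsi} pointwise at $m$, $(\ze\,\psi^*_V(f))(m)=(\psi^*_V(\ze_V f))(m)=(\ze_V f)(\psi(m))=(\ze f)(n)=0$. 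Therefore $[\psi^*_V(f)]_m\in\frak m_m$ again by (\ref{MaxIdea}), giving $\psi^*_m(\frak m_{\psi(m)})\subset\frak m_m$.

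\textbf{What is routine and what to be careful about.} This argument is essentially a bookkeeping exercise once Proposition \ref{commepspsi} is in hand; there is no genuine obstacle. The one point deserving attention is the compatibility of the sheaf-level statement of Proposition \ref{commepspsi} with the passage to stalks: one must note that $\ze$ commutes with restriction maps (this is part of $\ze$ being a morphism of sheaves $\cA_M\to\Ci_M$, and was already observed in Remark \ref{RemRat}), so that the identity $\ze_U\circ\psi^*_V=\psi^*_V\circ\ze_V$ may be evaluated at the point $m$ and interpreted germ-wise without ambiguity. With that in place, both displayed inclusions follow immediately, and in particular $\psi^*_m$ carries the unique maximal homogeneous ideal into the unique maximal homogeneous ideal, showing that locality of morphisms is automatic.
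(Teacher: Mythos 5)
Your argument is correct and is precisely the one the paper intends: the published proof simply states that the corollary is "a direct consequence of the definition $\cJ=\ker\ze$, Equation (\ref{MaxIdea}), and Proposition \ref{commepspsi}", and your write-up spells out exactly those two deductions. No differences in approach, only in level of detail.
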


\begin{proof} This result is a direct consequence of the definition $\cJ=\ker\ze$, Equation (\ref{MaxIdea}), and Proposition \ref{commepspsi}.\end{proof}

\begin{cor} The base map $\psi:M\to N$ of any $\Z_2^n$-morphism $\Psi:(M,\cA_M)\to (N,\cB_N)$ is smooth.\end{cor}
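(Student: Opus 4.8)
The plan is to reduce the smoothness of $\psi:M\to N$ to a statement about the pullback on classical functions, using Proposition \ref{commepspsi}. Since smoothness is a local property, it suffices to work in coordinate charts: pick $m\in M$, choose a chart $(V,\Ci_{\R^{p'}}|_V[[\zeta^1,\dots,\zeta^{q'}]])$ of $\cN$ around $\psi(m)$ with zero-degree coordinates $y=(y^1,\dots,y^{p'})$, and a chart $(U,\Ci_{\R^p}|_U[[\xi^1,\dots,\xi^q]])$ of $\cM$ around $m$ with zero-degree coordinates $x=(x^1,\dots,x^p)$, arranged so that $\psi(U)\subset V$.

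First I would recall that, by the previous proposition, the base sheaf $\Ci_N$ (resp.\ $\Ci_M$) is recovered as $\cB_N/\cJ_N$ (resp.\ $\cA_M/\cJ_M$) via the projection $\ze$, and that on a chart these base functions are exactly the ordinary smooth functions of the zero-degree coordinates. Next I would apply Proposition \ref{commepspsi}: for any open $W\subset N$ and $f\in\Ci_N(W)\simeq\cB_N(W)/\cJ_N(W)$, we have $\ze_U(\psi^*_W(\tilde f))=\psi^*_W(\ze_W(\tilde f))=\psi^*_W(f)$, where $\tilde f$ is any lift of $f$; equivalently, $\psi$ induces a morphism of the base structure sheaves $\Ci_N\to\psi_*\Ci_M$, $f\mapsto f\circ\psi$. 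Taking $f=y^k$, the $k$-th coordinate function on $V$, this says precisely that $y^k\circ\psi=\ze_U(\psi^*_V(y^k))$ is a smooth function on $U\subset\R^p$. Since this holds for every coordinate $y^1,\dots,y^{p'}$, the map $\psi|_U:U\to V\subset\R^{p'}$ has smooth components, hence is smooth. As $m$ was arbitrary and smoothness is local, $\psi$ is smooth.

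The only mild subtlety — and the closest thing to an obstacle — is checking that the $\ze$-image of a $\Z_2^n$-function expressed in coordinates really is the smooth function obtained by deleting all terms containing formal variables, i.e.\ that $\ze$ coincides with the naive zero-degree projection on a superdomain; but this is immediate from Corollary \ref{p1}, since $f-k$ fails to be invertible near $x$ exactly when $f_0(x)=k$, so $\ze_U(f)=f_0\in\Ci_U(U)$. Everything else is a routine unwinding of definitions together with the already-established compatibility $\ze_U\circ\psi^*_V=\psi^*_V\circ\ze_V$.
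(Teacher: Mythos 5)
Your argument is correct and follows essentially the same route as the paper: choose a chart around $\psi(m)$, lift each coordinate function $y^j$ to a section $t\in\cB(V)$ with $\ze_V(t)=y^j$, and use Proposition \ref{commepspsi} to conclude that $y^j\circ\psi=\ze_U(\psi^*_V(t))$ lies in $\Ci(U)$, whence $\psi$ is smooth componentwise and locally. The closing remark identifying $\ze$ with the zero-degree projection in a superdomain is a correct (and already established) observation, not a gap.
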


\begin{proof} Let $m \in M$, let $(V,y=(y^1,\ldots,y^u))$ be a classical chart of $N$ around $\psi(m)$, and set $U=\psi^{-1}(V)$. For any $g\in \Ci(V)$, there is $t\in {\cal B}(V)$ (just restrict $V$), such that
$$ g \circ \psi=  \psi^{*} \lp \ze_{V}(t) \rp = \ze_{U} \lp \psi^{*}(t) \rp\in \Ci(U)\;.$$
In particular, for $g=y^{j}$, we get $\psi^{j}= y^{j} \circ \psi \,\in \Ci(U)\;,$ so that $\psi\in \Ci(U,N)$ and, since $U$ is a neighborhood of an arbitrary point $m\in M$, $\psi \in \Ci(M,N)$.  \end{proof}

\subsection{Completeness of the $\Z_2^n$-function sheaf and the $\Z_2^n$-function algebras}

In standard Supergeometry, the decreasing filtration $\cA\supset \cJ\supset \cJ^2\supset\ldots$ of the structure sheaf $\cA$ of a supermanifold $\cM=(M,\cA)$ by the sheafs of ideals $\cJ^k$ \cite{Var}, induces embeddings $$M\hookrightarrow \cM^2\hookrightarrow \cM^3\hookrightarrow\ldots\hookrightarrow\cM\;,$$ where $\cM^k=(M,\cA/\cJ^k)$ is the superspace characterized by the sheaf $\cA/\cJ^k$ \cite{Man} (see also Section \ref{App3}). 

Let now $\cM=(M,\cA)$ be a $\Z_2^n$-supermanifold. The decreasing filtration $\cA\supset \cJ\supset \cJ^2\supset\ldots$ gives rise to an inverse system $$\cA/\cJ\leftarrow \cA/\cJ^2\leftarrow \cA/\cJ^3\leftarrow\ldots $$ of sheaves of algebras (we prove at the end of this subsection that the quotient presheaves $\cA/\cJ^k$ are actually sheaves). Since a limit is a universal cone, there is a sheaf morphism $\varprojlim_k\cA/\cJ^k\leftarrow \cA$. Moreover, as a limit in a category of sheaves is just the corresponding limit in the category of presheaves (which is computed objectwise), we get, for any $\Z_2^n$-superchart domain $U_\za$, $$\lp \varprojlim_k\cA/\cJ^k \rp (U_\za)=\varprojlim_k\cA(U_\za)/\cJ^k(U_\za)\simeq \cA(U_\za)\;,$$ see Equation (\ref{Completeness}). It follows that $$\varprojlim_k\cA/\cJ^k\simeq\cA\;$$ in the category of sheaves, so that the structure sheaf $\cA$ is complete with respect to the filtration implemented by $\cJ.$ 
Since isomorphic sheaves have isomorphic sections, we thus obtain, for any $U\subset M$, $$\varprojlim_k\cA(U)/\cJ^k(U)\simeq\cA(U)\;,$$ i.e. all function algebras $\cA(U)$, $U\subset M,$ are Hausdorff-complete with respect to the filtration induced by $\cJ(U)$, see Section \ref{App1}.

\begin{prop} The $\Z_2^n$-function sheaf $\cA_M$ (resp., the $\Z_2^n$-function algebra $\cA_M(U),$ $U\subset M$) of a $\Z_2^n$-supermanifold $(M,\cA_M)$ is Hausdorff-complete with respect to the $\cJ_M$-adic (resp., $\cJ_M(U)$-adic) topology.\end{prop}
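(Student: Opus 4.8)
The plan is to reduce the global statement to the already-established local computation. First I would recall that, by the discussion in the preceding paragraph, we already have the sheaf isomorphism $\varprojlim_k \cA/\cJ^k \simeq \cA$, and hence, taking sections over an arbitrary open $U\subset M$ and using that limits of sheaves are computed objectwise, $\varprojlim_k \cA(U)/\cJ^k(U)\simeq \cA(U)$. By the definition of Hausdorff-completeness recalled in Section~\ref{App1}, the statement that $\cA(U)$ is Hausdorff-complete with respect to the $\cJ_M(U)$-adic topology is \emph{exactly} this isomorphism, together with the observation that the canonical map $\cA(U)\to\varprojlim_k \cA(U)/\cJ^k(U)$ is the one realizing it (so that the kernel $\bigcap_k \cJ^k(U)$ is trivial, giving the Hausdorff part, and the map is onto, giving completeness). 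So the algebra statement is essentially immediate from what has already been done; I would simply spell out that the map in question is the natural one induced by the quotient projections, which follows from the universal property of the limit.

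For the sheaf statement I would argue locally. Over a coordinate domain $U_\za$, the structure sheaf restricts to a $\Z_2^n$-superdomain sheaf $\Ci_{U_\za}[[\xi^1,\ldots,\xi^q]]$, and the completeness assertion for such a sheaf is precisely Equation~(\ref{Completeness}) from the subsection on completeness of $\Z_2^n$-function algebras, applied section-wise, combined with the fact that $\cJ^\ell = \cO_\ell$ there. Since Hausdorff-completeness with respect to a decreasing filtration of ideal sheaves is a local property (it can be checked on a cover, because $\varprojlim$ commutes with restriction to a subcover and with taking sections), the global sheaf $\cA_M$ inherits it. The only genuine loose end is the parenthetical claim that the quotient \emph{presheaves} $\cA/\cJ^k$ are actually sheaves; I would handle this by noting that over each coordinate domain the quotient $\cA(U_\za)/\cJ^k(U_\za)$ is identified with $\Ci(U_\za)[[\xi^1,\ldots,\xi^q]]_{\le k-1}$, i.e. with the $\Ci_{U_\za}$-module of polynomials of bounded degree in the formal variables, which is a sheaf because it is a finite direct sum of copies of the (fine, hence soft) sheaf $\Ci_{U_\za}$; sheafification is therefore not needed, and the limit computed in presheaves already lies in sheaves.

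The main obstacle, such as it is, is bookkeeping rather than conceptual: one must be careful that the filtration $\cJ^k$ on the global sheaf restricts on each chart to the filtration by number-of-parameters used in the local computation, i.e. that $\cJ^k|_{U_\za}$ corresponds under the local isomorphism to $\cO_k = \Ci_{U_\za}[[\xi]]_{\ge k}$. This is where the \emph{locality} of morphisms of $\Z_2^n$-supermanifolds (Corollary~\ref{Claim1}) and the identification $\cJ=\ker\ze$ enter: a coordinate change respects $\cJ$ and its powers, so the filtration is intrinsic and the local descriptions patch. Once that compatibility is in hand, the proof is a one-line appeal to Equation~(\ref{Completeness}) on each chart plus the fact that an inverse limit of sheaves is computed objectwise, so that gluing the local isomorphisms $\varprojlim_k (\cA/\cJ^k)(U_\za)\simeq \cA(U_\za)$ yields the global one.

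\begin{proof}
Over any chart domain $U_\za$, the structure sheaf $\cA_M|_{U_\za}$ is isomorphic, as a sheaf of $\Z_2^n$-commutative algebras, to a $\Z_2^n$-superdomain sheaf $\Ci_{U_\za}[[\xi^1,\ldots,\xi^q]]$; moreover, by locality of $\Z_2^n$-morphisms (Corollary~\ref{Claim1}) together with the identification $\cJ=\ker\ze$, this isomorphism carries $\cJ^k|_{U_\za}$ onto the sheaf $\cO_k$ of series whose terms contain at least $k$ parameters, so the quotient $(\cA/\cJ^k)(U_\za)$ is identified with $\Ci(U_\za)[[\xi^1,\ldots,\xi^q]]_{\le k-1}$, a finite direct sum of copies of $\Ci_{U_\za}$. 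In particular each presheaf $\cA/\cJ^k$ restricts on $U_\za$ to a sheaf, hence is itself a sheaf, and the inverse limit $\varprojlim_k \cA/\cJ^k$, being computed objectwise in presheaves, already lies in sheaves. By Equation~(\ref{Completeness}), applied sectionwise over $U_\za$, the canonical map induced by the quotient projections gives an isomorphism $\varprojlim_k (\cA/\cJ^k)(U_\za)\simeq \cA(U_\za)$. These local isomorphisms are compatible with restriction (both sides are functorial and the identification above is natural in $U_\za$), so they glue to a sheaf isomorphism $\varprojlim_k \cA/\cJ^k\simeq\cA_M$. Taking sections over an arbitrary open $U\subset M$ and using once more that the limit is computed objectwise, $\varprojlim_k \cA_M(U)/\cJ_M^k(U)\simeq\cA_M(U)$, the isomorphism being the natural map. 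By the definition recalled in Section~\ref{App1}, this says precisely that $\cA_M$, and each $\cA_M(U)$, is Hausdorff-complete with respect to the $\cJ_M$-adic (resp.\ $\cJ_M(U)$-adic) topology.
\end{proof}
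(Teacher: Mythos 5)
Your overall strategy coincides with the paper's: everything except one point is already contained in the discussion preceding the proposition (the objectwise computation of $\varprojlim_k\cA/\cJ^k$ together with Equation (\ref{Completeness}) on chart domains), and you correctly isolate the remaining task, namely that the quotient presheaves $\cA/\cJ^k$ are sheaves. The gap lies in how you discharge that task. From the identification of $(\cA/\cJ^k)|_{U_\za}$ with a finite direct sum of copies of $\Ci_{U_\za}$ you conclude that $\cA/\cJ^k$ ``restricts on $U_\za$ to a sheaf, hence is itself a sheaf.'' That inference is false in general: a presheaf whose restriction to every member of an open cover is a sheaf need not satisfy the sheaf axioms on opens that are not contained in a single member of the cover. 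Concretely, the gluing axiom for $\cA/\cJ^k$ over a general $U\subset M$ asks the following: given chart domains $V_\za$ covering $U$ and sections $f_\za\in\cA(V_\za)$ with $f_\za-f_\zb\in\cJ^k(V_\za\cap V_\zb)$, produce $f\in\cA(U)$ with $f-f_\za\in\cJ^k(V_\za)$. The $f_\za$ agree on overlaps only modulo $\cJ^k$, so they cannot simply be glued, and your local identification says nothing about how to produce $f$.

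This is exactly where the paper does real work: it invokes a partition of unity $(\zp_\za)$ of the $\Z_2^n$-supermanifold subordinate to the cover (Section \ref{PartUnitQuotShea}) and sets $s=\sum_\za\zp_\za s_\za$; it also verifies the separation axiom by a short direct argument, gluing the local differences $j^k_{V_\za}\in\cJ^k(V_\za)$ into a global element of $\cJ^k(U)$. Your passing remark that $\Ci_{U_\za}$ is ``fine, hence soft'' points at the right circle of ideas, but you apply it on a single chart, where it is not needed (a finite direct sum of sheaves is a sheaf for elementary reasons), rather than to $\cJ^k$ over all of $M$, which is where softness, or equivalently the partition of unity, is actually required. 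The rest of your argument -- the reduction to Equation (\ref{Completeness}), the objectwise computation of the inverse limit of sheaves, and the compatibility of the filtration with the local model -- is sound and matches the paper.
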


\begin{proof} It suffices to show that the presheaves $\cA/\cJ^k$, $k\ge 1$, are in fact sheaves.

Let $s,t\in\cA(U)/\cJ^k(U)$, $U\subset M$, coincide on an open cover $(V_\za)_\za$ of $U$. Then $s|_{V_\za}=t|_{V_\za}+j^k_{V_\za}$, with $j^k_{V_\za}\in \cJ^k(V_\za)$. The latter thus coincide on the intersections $V_\za\cap V_\zb$. Their gluing provides $j^k\in \cJ^k(U)$ and $s|_{V_\za}=t|_{V_\za}+j^k|_{V_\za}$. Since $\cA$ is a separated presheaf (even a sheaf), we get $s=t+j^k$ in $\cA(U)$ and $s=t$ in $\cA(U)/\cJ^k(U)$.

As for the second sheaf property, consider a family $s_\za\in\cA(V_\za)/\cJ^k(V_\za)$ such that $s_\za=s_\zb$ on $V_\za\cap V_\zb$. Let $\zp_\za$ be a partition of unity of $\cM$ that is subordinated to $V_\za$ (see Section \ref{PartUnitQuotShea}). Any product $\zp_\za s_\za$ of $s_\za$ by $\zp_\za\in\cA^{0}(U)$ ($\op{supp}\zp_\za\subset V_\za$) is well-defined in $\cA(U)/\cJ^k(U)$ ($\op{supp}(\zp_\za s_\za)\subset V_\za$). Then $s=\sum_\za\zp_\za s_\za\in\cA(U)/\cJ^k(U)$ and $s|_{V_\za}=s_\za$.         \end{proof}

\subsection{Fundamental theorem of $\Z_2^n$-morphisms}

In this section we prove an extension of the main statement of Section \ref{MorphTheo}.

\subsubsection{Statement}

Let  $$ \Psi =(\psi, \psi^{*}) :  {\cal M} =(M,\cA_M) \raa {\cal V}^{\,u|\mathbf{v}} =(V, \Ci_V[[\zh^1,\ldots,\zh^v]]) $$ be a $\Z_2 ^n$-morphism valued in a $\Z_2^n$-superdomain. Denote by $y=(y^1,\ldots,y^u)$ the coordinates of $V$ and by $\zt_b\in \Z_2^n\setminus\{0\}$ the degree of $\zh^b$. Then the functions $$s^{j}:= \psi^{*}_Vy^{j}\;\text{and}\;\zeta^{b}:= \psi^{*}_V\eta^{b}$$ (for all $j$ and $b$)
satisfy
\be\label{C1} s^{j} \in \cA^{0}_M(M)\;,\zeta^{b} \in \cA_M^{\zt_b}(M)\;\text{and}\;\lp  \ze s^{1}, \ldots, \ze s^{u}\rp(M)\subset V\;.\ee

Actually, the pullbacks $(s^j,\zeta^b)$ (all $j$ and $b$) of the coordinate functions $(y^j,\zh^b)$ completely determine the $\Z_2^n$-morphism. More precisely:

\begin{thm}[Fundamental theorem of $\Z_2^n$-morphisms]\label{FundaTheoSuperm} If ${\cal M}=(M,\cA_M)$ is a $\Z_2^n$-supermani-fold, ${\cal V}^{\,u|\mathbf{v}}$ a $\Z_2^n$-superdomain as above, and if $(s^j,\zeta^b)$ is an $(u+v)$-tuple of $\Z_2^n$-functions in $\cA_M(M)$ that satisfy the conditions (\ref{C1}), there exists a unique morphism of $\Z_2^n$-supermanifolds
$\Psi =(\psi, \psi^{*}) :  {\cal M} \raa {\cal V}^{u|\mathbf{v}},$ such that $ s^{j}= \psi^{*}_Vy^{j}$ and $\zeta^{b}= \psi^{*}_V\eta^{b}$.\end{thm}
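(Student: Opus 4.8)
The plan is to reduce the statement about a general $\Z_2^n$-supermanifold $\cM$ to the already-treated local situation of Section \ref{MorphTheo}, where morphisms between $\Z_2^n$-superdomains were constructed in coordinates via the formal Taylor expansion (\ref{FormTayl}), and then to glue. First I would fix an atlas $({\cal U}_\za,\psi_\za)_\za$ of $\cM$, with $\Z_2^n$-superdomain charts ${\cal U}_\za=(U_\za,\Ci_{U_\za}[[\xi^1,\ldots,\xi^q]])$ and coordinates $(x_\za,\xi_\za)$. On each chart the restrictions $s^j|_{U_\za}$ and $\zeta^b|_{U_\za}$ are concrete $\Z_2^n$-functions of the form (\ref{mor1}): since $s^j\in\cA^0_M(M)$, the body $\ze(s^j)|_{U_\za}=:\zf_\za^j$ is a smooth real function on $U_\za$, and by (\ref{C1}) the map $\zf_\za=(\zf_\za^1,\ldots,\zf_\za^u)$ takes values in $V$; the remaining part $j_\za^j:=s^j|_{U_\za}-\zf_\za^j$ lies in $\cJ(U_\za)$, and $\zeta^b|_{U_\za}$ has degree $\zt_b$. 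This is exactly the data that Section \ref{MorphTheo} turns into a $\Z_2^n$-morphism of superdomains $\Psi_\za=(\psi_\za^{\mathrm{body}},\psi_\za^*):{\cal U}_\za\to{\cal V}^{u|\mathbf v}$: the base map is $m\mapsto\zf_\za(x_\za(m))\in V$ (which is smooth, hence continuous, so $\psi_\za$ is a genuine morphism of topological spaces), and the pullback is defined chartwise by formulas (\ref{Pullback})--(\ref{FormTayl}) with $y^j\mapsto s^j|_{U_\za}$, $\eta^b\mapsto\zeta^b|_{U_\za}$. Section \ref{MorphTheo} already checked that this $\psi_\za^*$ is a well-defined, degree-$0$, unital graded $\R$-algebra morphism; because its body carries the unique-maximal-ideal property (Corollary \ref{p1} and Equation (\ref{MaxIdea})), $\Psi_\za$ is in fact a morphism of locally $\Z_2^n$-ringed spaces.

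Next I would verify that the local morphisms $\Psi_\za$ agree on overlaps and hence glue. On $U_\za\cap U_\zb$ one has the coordinate transformation $\Psi_{\zb\za}$ of the atlas relating $(x_\za,\xi_\za)$ and $(x_\zb,\xi_\zb)$; the point is that $\Psi_\za$ and $\Psi_\zb\circ\Psi_{\zb\za}$ are two morphisms $ {\cal U}_\za|_{U_\za\cap U_\zb}\to{\cal V}^{u|\mathbf v}$ whose pullbacks send the coordinate functions $y^j,\eta^b$ to the \emph{same} global functions $s^j,\zeta^b$ restricted to the overlap. To conclude they coincide I need the \emph{uniqueness} half of the local statement: a $\Z_2^n$-morphism from a superdomain to ${\cal V}^{u|\mathbf v}$ is determined by the pullbacks of $(y^j,\eta^b)$. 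This is where the completeness results of the paper enter decisively: the filtration $\cJ\supset\cJ^2\supset\cdots$ is Hausdorff (by Proposition on Hausdorff-completeness, $\bigcap_k\cJ^k=0$), the pullback is $\cJ$-continuous (Corollary \ref{Claim1}), and any $\Z_2^n$-function is a limit of its polynomial truncations; since two such continuous algebra morphisms agree on the generators $y^j,\eta^b$ and on the bodies, they agree on all polynomials in the $\eta$'s with smooth coefficients, hence on the completion. So the $\Psi_\za$ glue to a single morphism of locally $\Z_2^n$-ringed spaces $\Psi=(\psi,\psi^*):\cM\to{\cal V}^{u|\mathbf v}$, and by construction $\psi^*_Vy^j=s^j$, $\psi^*_V\eta^b=\zeta^b$.

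Uniqueness of $\Psi$ globally then follows by the same mechanism applied chart by chart: if $\Psi'=(\psi',\psi'^*)$ is another morphism with $\psi'^*_Vy^j=s^j$ and $\psi'^*_V\eta^b=\zeta^b$, then by Proposition \ref{commepspsi} the bodies satisfy $y^j\circ\psi'=\ze(s^j)=y^j\circ\psi$ on each $U_\za$, so $\psi'=\psi$; and restricting $\psi'^*$ to a chart gives a $\cJ$-continuous graded algebra morphism agreeing with $\psi^*$ on the generators, hence (by the density/continuity argument above) $\psi'^*=\psi^*$ on all of $\cA({\cal V})$, and therefore on the structure sheaf by the sheaf axioms.

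The routine parts are the verifications that the chartwise $\psi_\za^*$ is a unital degree-$0$ algebra morphism and that the formal Taylor series converge in the filtration sense --- all of this is already done in Section \ref{MorphTheo}, so I would only cite it. The genuine obstacle is the gluing/uniqueness step, and within it the fact that equality of pullbacks on the finitely many coordinate generators forces equality on \emph{all} $\Z_2^n$-functions. In classical Supergeometry this is immediate because the structure sheaf is generated over $\Ci$ by finitely many \emph{nilpotent} odd generators, so every section is a \emph{polynomial}; here nilpotency is lost and sections are genuine formal power series, so the argument must go through a continuity-plus-density statement for the $\cJ$-adic topology. Thus the heart of the proof is precisely the package assembled just before this theorem --- the locally-ringed structure (Proposition \ref{prop:domlocality}, Corollary \ref{p1}), the invariance of $\cJ$ and $\frak m$ under morphisms (Corollary \ref{Claim1}), and the Hausdorff-completeness of the function sheaf --- which together let the finite coordinate data of (\ref{C1}) propagate to a unique global morphism.
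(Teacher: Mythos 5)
Your overall architecture --- construct $\Psi$ chart by chart from the data $(s^j,\zeta^b)$ via the formal Taylor expansion of Section \ref{MorphTheo}, then glue using uniqueness --- is the same as the paper's (the paper proves uniqueness globally first and then invokes it to reduce existence to a single superchart domain), and your identification of uniqueness as the genuine obstacle is correct. But your uniqueness argument has a gap at exactly the critical point. You claim that because the two pullbacks are $\cJ$-adically continuous algebra morphisms that ``agree on the generators $y^j,\eta^b$ and on the bodies, they agree on all polynomials in the $\eta$'s with smooth coefficients, hence on the completion.'' Agreement on the generators, together with the unital $\R$-algebra morphism property, only determines the pullback of sections whose coefficients are \emph{polynomial in the base coordinates}: for $P=\sum_\zm p_\zm(y)\eta^\zm$ with $p_\zm$ polynomial one indeed gets $\psi_i^*P=\sum_\zm p_\zm(s)\zeta^\zm$. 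It does \emph{not} determine $\psi_i^*(f_\zm(y))$ for a general smooth $f_\zm$, because $\psi_i^*(f_\zm(y))$ is not $f_\zm\circ\psi$ but carries correction terms in $\cJ$ that are not an algebraic expression in the $s^j$; and agreement of the base maps (``the bodies'') says nothing about those corrections. The $\cJ$-adic density you invoke only handles the formal-variable direction, not the smooth coefficients.

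The paper closes this gap with a second, independent continuity-plus-density step: at each point $m$ one approximates $f$ by a section $P$ with genuinely polynomial coefficients (Taylor expansion of the $f_\zm$ at $\psi(m)$, Theorem \ref{Claim3}), so that $[f]_{\psi(m)}-[P]_{\psi(m)}\in\frak{m}_{\psi(m)}^{k}$; the $\frak{m}$-adic continuity of the induced stalk maps (Corollary \ref{Claim1}) then gives $[\psi_1^*f-\psi_2^*f]_m=[\psi_1^*(f-P)]_m-[\psi_2^*(f-P)]_m\in\frak{m}_m^{k}$ for every $k$, and the explicit description of $\frak{m}_m^{k}$ in Lemma \ref{Claim4} forces all coefficients of $\psi_1^*f-\psi_2^*f$ to vanish at $m$. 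You do list ``the invariance of $\cJ$ and $\frak m$ under morphisms'' among the ingredients in your closing paragraph, but the argument you actually run never uses the $\frak m$-adic half, and you never invoke the pointwise polynomial approximation of the smooth coefficients that makes it bite. This two-step structure --- $\cJ$-adic for the $\eta$-direction and polynomial coefficients, then $\frak m$-adic pointwise for smooth coefficients --- is the essential content of the theorem once nilpotency is lost, and it is missing from your write-up.
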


The proof requires some preparatory work.

\subsubsection{Polynomial approximation of $\Z_2^n$-functions}

We first describe the ${\frak m}_m$-adic topology of $\cA_m$, $m\in M$.\medskip

When taking an interest in the stalks $\cA_m$ of the function sheaf of a $\Z_2^n$-supermanifold $(M,\cA_M)$ of dimension $p|\mathbf{q}$, we can choose a centered chart $(x,\xi)$ around $m$ and work in a $\Z_2^n$-superdomain ${\cal U}^{p|\mathbf{q}}$ associated with a convex open subset $U\subset \R^p,$ in which $m\simeq x=0$. Since ${\frak m}_m=\{[f]_m:\ze(f)(m)=0\}$, a Taylor expansion (with remainder) around $m\simeq x=0$ of the coordinate form of $\ze(f)$ shows that $${\frak m}_m\simeq \{[f]_0: f(x,\xi) = 0(x) + \sum_{|\zm|>0}f_\zm(x)\xi^\zm \}\;,$$ where $0(x)$ are terms of degree at least 1 in $x$.

\begin{lem}\label{Claim4} For any $m\in M$, the basis of neighborhoods of $\,0$ in the ${\frak m}_m$-adic topology of $\cA_m$ is given by 
$$
{\frak m}_m^{k+1}=\{[f]_0: f(x,\xi) = \sum_{0\le|\zm|\le k}0_\zm(x^{k-|\zm|+1})\xi^\zm+\sum_{|\zm|>k}f_\zm(x)\xi^\zm\}\quad(k\ge 0)\;,
$$ 
where notation is the same as above. \end{lem}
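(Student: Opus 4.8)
The plan is to compute $\mathfrak{m}_m^{k+1}$ explicitly by induction on $k$, using the already-established description of $\mathfrak{m}_m$ as the set of classes $[f]_0$ whose coordinate form is $f(x,\xi)=0(x)+\sum_{|\mu|>0}f_\mu(x)\xi^\mu$, where $0(x)$ denotes a smooth function (of $x$ alone) vanishing to first order at $x=0$. Since powers of an ideal are generated by products of $k+1$ elements of that ideal, I would first identify a convenient generating set for $\mathfrak{m}_m$: the coordinates $x^1,\ldots,x^p$ together with the formal variables $\xi^1,\ldots,\xi^q$ generate $\mathfrak{m}_m$ as an ideal of $\cA_m\simeq \Ci_{U,0}[[\xi^1,\ldots,\xi^q]]$ (any element of $\mathfrak{m}_m$ is a $\Ci_{U,0}[[\xi]]$-combination of these, using a first-order Taylor expansion with integral remainder for the $x$-dependent part). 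Consequently $\mathfrak{m}_m^{k+1}$ is spanned over $\cA_m$ by the degree-$(k+1)$ monomials in $x^1,\ldots,x^p,\xi^1,\ldots,\xi^q$; a monomial involving $r$ of the $x$'s and a multi-index $\mu$ of formal variables with $|\mu|=k+1-r$ contributes, after multiplying by an arbitrary series coefficient, a term whose $\xi^\mu$-component is a smooth function vanishing to order $r=k+1-|\mu|$ at $0$. This is exactly the description in the statement, once one checks that $|\mu|$ can range over $0\le|\mu|\le k+1$ and that for $|\mu|>k$ the order-of-vanishing condition $x^{k-|\mu|+1}$ becomes vacuous (the exponent is $\le 0$), leaving an unconstrained coefficient $f_\mu(x)$.

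More carefully, I would prove the two inclusions separately. For "$\subseteq$": take a product $g_0 g_1\cdots g_k$ of $k+1$ elements of $\mathfrak{m}_m$; writing each $g_i$ in coordinate form $g_i = 0_i(x)+\sum_{|\nu|>0}(g_i)_\nu(x)\xi^\nu$ and expanding the product, every resulting monomial $x^{\alpha}\xi^\nu$ (with the $x$-part bundled into a single smooth coefficient) arises from choosing, for each factor $g_i$, either its vanishing $x$-part or a $\xi$-monomial; if $t$ factors contribute their $\xi$-parts (accounting for at least $t$ of the formal variables in total, so $|\nu|\ge$ however many, but crucially the remaining $k+1-t$ factors each contribute a coefficient vanishing at $0$), the coefficient of $\xi^\nu$ vanishes to order at least $k+1-t\ge k+1-|\nu|$ at $x=0$, since each $\xi$-contributing factor supplies at least one formal variable so $t\le|\nu|$. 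Extending by $\cA_m$-linearity (multiplying by an arbitrary series only decreases orders of vanishing favorably or adds more formal variables) gives the inclusion $\mathfrak{m}_m^{k+1}\subseteq$ RHS. For "$\supseteq$": given $f$ in the set on the right-hand side, split $f=\sum_{|\mu|\le k}0_\mu(x^{k-|\mu|+1})\xi^\mu + \sum_{|\mu|>k}f_\mu(x)\xi^\mu$; the second sum lies in $\mathfrak{m}_m^{k+1}$ because each monomial already contains $>k$ formal variables, hence is a product of $\ge k+1$ generators; for the first sum, a smooth function vanishing to order $k-|\mu|+1$ at $0$ lies in the $(k-|\mu|+1)$-st power of the maximal ideal $(x^1,\ldots,x^p)$ of $\Ci_{U,0}$ (Taylor with remainder / Hadamard's lemma iterated), so $0_\mu(x^{k-|\mu|+1})\xi^\mu\in (x)^{k-|\mu|+1}\cdot(\xi)^{|\mu|}\subseteq \mathfrak{m}_m^{k+1}$.

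Finally, I would note that these $\mathfrak{m}_m^{k+1}$ indeed form a neighborhood basis of $0$ for the $\mathfrak{m}_m$-adic topology essentially by definition of the adic topology (the powers of the ideal are a fundamental system of neighborhoods of $0$), so the content of the lemma is purely the explicit coordinate description just derived. The main obstacle I anticipate is the bookkeeping in the "$\subseteq$" direction: one has to argue cleanly that when $t$ of the $k+1$ factors in a product contribute $\xi$-terms and the other $k+1-t$ contribute $x$-vanishing terms, the total order of vanishing in $x$ of the coefficient of a given $\xi^\mu$ is at least $k+1-t$, and then that $t\le|\mu|$ because each $\xi$-contributing factor carries at least one formal variable — together these give order $\ge k+1-|\mu|$, matching the claimed exponent $x^{k-|\mu|+1}$. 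The rest is standard manipulation of formal power series and Hadamard's lemma, both of which are routine given Lemma~\ref{lem:FPSinvert} and the preceding description of $\mathfrak{m}_m$.
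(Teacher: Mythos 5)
Your overall strategy coincides with the paper's (prove the two inclusions separately, using the coordinate description of $\mathfrak{m}_m$ and iterated Hadamard/Taylor for the $x$-part), and your treatment of the inclusion $\mathfrak{m}_m^{k+1}\subseteq\mathrm{RHS}$ is in fact more detailed than the paper's, which dismisses it as obvious; the bookkeeping with $t$ factors contributing $\xi$-parts and $k+1-t$ factors contributing vanishing $x$-parts is correct. There is, however, one genuine gap, and it sits exactly at the step to which the paper devotes its entire written proof. For the reverse inclusion you assert that the tail $\sum_{|\mu|>k}f_\mu(x)\xi^\mu$ lies in $\mathfrak{m}_m^{k+1}$ ``because each monomial already contains $>k$ formal variables, hence is a product of $\ge k+1$ generators''. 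That each individual monomial lies in $\mathfrak{m}_m^{k+1}$ is true but not sufficient: $\mathfrak{m}_m^{k+1}$ is by definition the set of \emph{finite} sums of products of $k+1$ elements of $\mathfrak{m}_m$, and an infinite series of elements of an ideal need not belong to that ideal. In classical supergeometry the issue does not arise because the $\xi^a$ are nilpotent and all sums are finite; here it is precisely the point at stake --- it is the reason the lemma is not a tautology --- so it cannot be waved through.

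The gap is fixable, and the fix is the paper's proof: regroup the infinite tail as a \emph{finite} sum
$$\sum_{(i_1,\ldots,i_k)}\xi^{i_1}\cdots\xi^{i_k}\Bigl(\sum_{|\mu|>k}f_\mu(x)\,\xi^{\mu-e_{i_1}-\cdots-e_{i_k}}\Bigr),$$
where the outer sum runs over the finitely many combinations (with repetitions) of $k$ of the $q$ generators and each term of the tail is assigned to one such combination dividing it. Since $|\mu|>k$, each inner series still carries at least one formal variable and is therefore a \emph{single} element of $\mathfrak{m}_m$; each summand is then a product of $k+1$ elements of $\mathfrak{m}_m$, and the outer sum is finite, so the tail lies in $\mathfrak{m}_m^{k+1}$. (The same remark applies, at level one, to your preliminary claim that $x^1,\ldots,x^p,\xi^1,\ldots,\xi^q$ generate $\mathfrak{m}_m$: the series part $\sum_{|\nu|>0}f_\nu(x)\xi^\nu$ must likewise be regrouped as a finite sum $\sum_a\xi^a(\cdots)$ before it is visibly in the ideal generated by the $\xi^a$.) With this regrouping inserted your argument is complete; everything else --- the Hadamard step for the finitely many terms with $|\mu|\le k$, and the observation that the powers of $\mathfrak{m}_m$ form a neighborhood basis of $0$ by definition of the adic topology --- is correct and matches the paper.
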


\begin{proof} The inclusion $\subset$ is obvious. For $\supset$, note that all the terms whose number $|\zm|$ of generators is $\le k$ belong to ${\frak m}_m^{k+1}$; as for the series over $|\zm|>k$, write it in the form $$\sum \xi^{i_1}\ldots\xi^{i_k}\sum_{|\zm|>k}f_\zm(x)\,\xi^{\zm-e_{i_1}-\ldots-e_{i_k}}\;,$$ where $e_a$ is the canonical basis of $\R^q$ and where the sum is over all combinations (with repetitions) of $k$ of the $q$ elements $\xi^a$. Each term of this finite sum is in ${\frak m}_m^{k+1}$ and so is the sum itself.\end{proof}

Roughly speaking, since $\psi^*_{V}$ is an algebra morphism, the data $ \psi^{*}_Vy^{j}=s^{j}$ and $ \psi^{*}_V\eta^{b}=\zeta^{b}$ uniquely determine the pullback $\psi^{*}_VP$ of any section $P\in \op{Pol}_V(V)[[\zh^1,\ldots,\zh^v]]$ with polynomial coefficients. Hence the quest for an appropriate polynomial approximation of an arbitrary section. Let us emphasize that here and in the following, the term `polynomial section' refers to a formal series $\sum_{|\zm|\ge 0} P_\zm(x)\xi^\zm$ in the parameters $\xi^a$ with coefficients $P_\zm(x)\in \op{Pol}_V(V)$ that are polynomial in the base variables $x^i$.

\begin{thm}[Polynomial approximation] \label{Claim3}
Let $m\in M$ be a base point of a $\Z_2^n$-supermanifold $(M,\cA_M)$ and let $f\in\cA(U)$ be a $\Z_2^n$-function defined in a neighborhood $U$ of $m$. For any fixed degree of approximation $k\in\N\setminus\{0\}$, there exists a polynomial $P=P(x,\xi)$ such that $$[f]_{m}-[P]_{m} \in \frak{m}_{m}^{k}\;.$$
\end{thm}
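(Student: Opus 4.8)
The plan is to pass to the coordinate picture and then simply truncate the Taylor expansions of the coefficient functions. First I would fix a centered chart $(x,\xi)$ around $m$ associated with a convex open subset $U\subset\R^p$ in which $m\simeq x=0$, as in the discussion preceding Lemma \ref{Claim4}. Since only the germ $[f]_m$ matters, I may shrink $U$ to a convex neighbourhood on which a representative of $f$ is defined and has a coordinate form $f(x,\xi)=\sum_{|\zm|\ge 0}f_\zm(x)\,\xi^\zm$ with all coefficients $f_\zm\in\Ci(U)$. Throughout I would use the explicit description of the ${\frak m}_m$-adic neighbourhood basis from Lemma \ref{Claim4}: rewriting it with $k$ in place of $k+1$, a germ lies in ${\frak m}_m^{k}$ iff, in coordinates, the $\xi^\zm$-coefficient is of the form $0_\zm(x^{k-|\zm|})$ for every $\zm$ with $|\zm|\le k-1$, while the coefficients with $|\zm|\ge k$ are unconstrained.

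Next I would invoke Taylor's theorem with remainder (Hadamard's lemma, iterated): for $g\in\Ci(U)$, $U$ convex and $0\in U$, and any $N\ge 0$, one has $g(x)=T_N(g)(x)+\sum_{|\za|=N+1}x^{\za}h_{\za}(x)$ with $h_{\za}\in\Ci(U)$, where $T_N(g)(x):=\sum_{|\za|\le N}\tfrac{1}{\za!}\,(\p^{\za}g)(0)\,x^{\za}$ is the degree-$\le N$ Taylor polynomial of $g$ at $0$. In particular $g-T_N(g)$ is of the form $0(x^{N+1})$ in the notation of Lemma \ref{Claim4}.

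I would then take as approximating polynomial
$$P(x,\xi):=\sum_{0\le|\zm|\le k-1}T_{\,k-1-|\zm|}(f_\zm)(x)\,\xi^\zm\;.$$
This is a finite sum over the multi-indices $\zm\in\N^q$ with $|\zm|\le k-1$, and each coefficient is a polynomial in the base variables $x^i$, so $P$ is a genuine polynomial section. Subtracting,
$$f-P=\sum_{0\le|\zm|\le k-1}\bigl(f_\zm(x)-T_{\,k-1-|\zm|}(f_\zm)(x)\bigr)\xi^\zm+\sum_{|\zm|\ge k}f_\zm(x)\,\xi^\zm\;,$$
and by the Taylor estimate each coefficient in the first sum is of the form $0_\zm(x^{k-|\zm|})$, while the second sum involves only monomials with $|\zm|\ge k$. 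By the description of ${\frak m}_m^{k}$ recalled above this is exactly the statement that $[f]_m-[P]_m\in{\frak m}_m^{k}$, which proves the theorem.

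I do not expect a genuine obstacle here; the only points needing a little care are the index bookkeeping (tracking the shift $k\mapsto k-1$ in Lemma \ref{Claim4}, so that a $\xi^\zm$-coefficient with $|\zm|=j<k$ must be annihilated to order $k-j$ at $x=0$ and is therefore replaced by its degree-$(k-1-j)$ Taylor polynomial), verifying that $P$ is a finite expression rather than a formal series, and using the smoothness of the $f_\zm$ so that genuine Taylor remainders with smooth coefficients are available.
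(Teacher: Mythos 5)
Your proof is correct and follows essentially the same route as the paper's: pass to a centered chart, Taylor-expand the coefficient functions at $x=0$, and use the explicit description of ${\frak m}_m^{k}$ from Lemma \ref{Claim4} to see that the remainder lies in ${\frak m}_m^{k}$. The only (harmless) difference is that you also truncate in the $\xi$-directions and use Taylor order $k-1-|\zm|$ so that $P$ is a genuine finite polynomial, whereas the paper Taylor-expands every coefficient to order $k-1$ and keeps the full $\xi$-series, consistent with its convention that a ``polynomial section'' is a formal series in $\xi$ with coefficients in $\op{Pol}_V(V)$.
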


In this statement the polynomial $P$ depends on $m,$ $f$, and $k$, and the variables $(x,\xi)$ are (pullbacks of) coordinates centered at $m$.

\begin{proof}
When writing $f=f(x,\xi)=\sum_{|\zm|\ge 0}f_\zm(x)\xi^\zm$ and using a Taylor expansion of the $f_{\zm}(x)$ at $m\simeq x=0$, we get $$ f(x,\xi)= \sum_{|\zm|\ge 0} P_{\zm}(x) \xi^{\zm} + \sum_{|\zm|\ge 0} 0_{\zm}(x^{k}) \xi^{\zm}, $$ where the first sum of the {\small RHS} is the searched polynomial $P=P(x,\xi)$ and where the (germ of the) second sum belongs to ${\frak m}_m^k$.\end{proof}

\subsubsection{Proof of the Morphism Theorem}

1. Uniqueness: if the searched $\Z_2^n$-morphism $\Psi=(\psi,\psi^*)$ exists, it is necessarily unique. Indeed, let $\Psi_1=(\psi_{1},\psi_{1}^{*})$ and $\Psi_2=(\psi_{2},\psi_{2}^{*})$ be two $\Z_2^n$-morphisms defined by the same $(s^j,\zeta^b)$.

Note first that $\psi_1=\psi_2$: if we denote the coordinates of ${\cal V}^{p|\mathbf{q}}$ by $(y^j,\zh^b)$, commutation of the pullback maps with the projections to the base entails that, for all $j$, $$\psi_1^j=y^j\circ \psi_1=\ze\psi_1^*y^j=\ze s^j=\ze\psi_2^*y^j=y^j\circ\psi_2=\psi_2^j.$$

As for the comparison of $\psi_1^*$ and $\psi_2^*$, we first show (rigorously) that they coincide on polynomial sections (using continuity of the pullbacks of sections with respect to the $\cJ$-adic topology), then we show that they coincide on arbitrary section (using the polynomial approximation and continuity of the pullbacks of germs with respect to the ${\frak m}$-adic topology).

Let $W\subset V$ be open, set $\cO(W):=\Ci(W)[[\zh^1,\ldots,\zh^v]]$, $\psi:=\psi_1=\psi_2$ and $U:=\psi^{-1}(W)$, and take $$f(y,\zh)=\sum_{|\zm|\ge 0}f_\zm(y)\zh^{\,\zm}\in\cO(W)\;.$$

The sequence $$\sum_{k=0}^n\sum_{|\zm|=k}\psi_i^*\lp f_\zm(y)\rp\lp\psi_i^*\zh\rp^\zm\in\cA(U)$$ is Cauchy, see \ref{App1}. Indeed, $$\psi_i^*\sum_{|\zm|=k}f_\zm(y) \zh^{\,\zm}\in\cJ^k(U)\;,$$ in view of the $\cJ$-continuity of $\psi_i^*$. 
Since $\cA(U)$ is Hausdorff-complete with respect to the $\cJ(U)$-topology, the considered sequence has a unique limit $$\sum_{|\zm|\ge 0}\psi_i^*\lp f_\zm(y)\rp\lp\psi_i^*\zh\rp^\zm\in\cA(U)\;.$$ It is easily seen that this limit is given by $\psi_i^*f\in\cA(U)$: the difference of the latter and the $n$-th term of the sequence equals $$\psi_i^*\sum_{|\zm|>n}f_\zm(y)\zh^{\,\zm}\in\cJ^{n+1}(U)\;.$$

If the function $f=:P$ has polynomial coefficients $f_\zm(y)=:p_\zm(y)$, we get $$\psi_i^*P=\sum_{|\zm|\ge 0}\psi_i^*\lp p_\zm(y)\rp\lp\psi_i^*\zh\rp^\zm=\sum_{|\zm|\ge 0}p_\zm(\psi_i^*y)\lp\psi_i^*\zh\rp^\zm=\sum_{|\zm|\ge 0}p_\zm(s)\zeta^{\,\zm}\;,$$ so that $\psi_1^*$ and $\psi_2^*$ coincide on polynomial functions $P\in\cO(W).$

Consider now an arbitrary function $f\in\cO(W)$, any point $m_0\in U$, as well as a $\Z_2^n$-superchart $(U_\za,\Phi_\za)$ around $m_0$. For every $m\in U_\za$, we have $\psi(m)=:n\in W$. In view of the polynomial approximation theorem \ref{Claim3}, there is, for any $k$, a polynomial $P$ such that $\psi_i^*\lp [f]_n-[P]_n\rp \in\frak{m}_m^{k+1}$. Hence, $$[\psi_1^*f-\psi_2^*f]_m=[\psi_1^*f]_m-[\psi_1^*P]_m-[\psi_2^*f]_m+[\psi_2^*P]_m\in\frak{m}_m^{k+1}\;.$$ Since $k$ is arbitrary, it follows from Lemma \ref{Claim4} that any coefficient of $\psi_1^*f-\psi_2^*f$ vanishes at $m$. As $m$ is arbitrary in $U_\za$, we see that $\lp\psi_1^*f-\psi_2^*f\rp|_{U_\za}=0$, i.e. that $\psi_1^*f-\psi_2^*f$ vanishes on an open cover of $U$: $\psi_1^*f=\psi_2^*f$ for any $f$, so $\psi_1^*=\psi_2^*$. This completes the proof of uniqueness.\medskip

2. Existence: The base map $\psi$ is defined by $\psi:=(\ze s^1,\ldots,\ze s^u)\in \Ci(M,V)$. As for the pullback $\psi^*$, let $W\subset V$ be open. To construct the graded unital $\R$-algebra morphism $$\psi^{*}_{W} : \cO(W)  \to \cA \lp \psi^{-1}(W)\rp\;,$$ we cover the open subset $\psi^{-1}(W)\subset M$ by $\Z_2^n$-superchart domains $U_\za$. In view of uniqueness, we can take $\psi^{-1}(W)=U_\za$ and build $\psi^*_{W}: \cO(W)\to \cA(U_\za)$. This construction is the same as the one described in Section \ref{MorphTheo} and similar to the proof that holds in the super-case \cite{Var}.

\section{Appendix}

\subsection{Completeness}\label{App1}

\subsubsection{Completion of an Abelian group}

Let $G$ be an Abelian group together with a decreasing filtration by Abelian subgroups $$G=G_0\supset G_1\supset G_2\supset\ldots$$ The sequence $$G/G_0=\{0\}\leftarrow G/G_1\leftarrow G/G_2\leftarrow\ldots\;,$$ where the morphism $f_{k-1,k}:G/G_k\to G/G_{k-1}$ associates to any coset $g+G_k$, $g\in G$, the coset $g+G_{k-1}\,,$ is an inverse system in the category $\tt AbGr$ of Abelian groups. The limit $$\widehat G:=\varprojlim_k G/G_k\in\tt AbGr$$ is the {\it completion} of $G$ with respect to the considered decreasing filtration. The completion $\widehat G$ is in fact a complete topological group. Indeed, the cosets $g+G_k$, $g\in G$, $k\in\N$, are a basis of a topology of $G$ called {\it topology associated to the filtration} $G_k$. This topology turns $G$ into a topological group and $\widehat G$ into a complete topological group.\medskip

If $G$ has additional structure, e.g. is a not necessarily commutative ring (resp., a module over a ring), and {if the decreasing filtration of subgroups is compatible}, i.e. $G_k\cdot G_\ell\subset G_{k+\ell}$ (resp., $G_k$ is a submodule), then the completion is itself a ring (resp., a module). If the filtration of a ring $G$ is implemented by an ideal $I$, i.e. if $G_k=I^k:=I\cdots I$, the associated topology is also referred to as the {\it $I$-adic topology}.

\subsubsection{Hausdorff-completeness}

Let $\zk$ be a commutative unital ring and let $$M=M_0\supset M_1\supset M_2\supset\ldots$$ be a decreasing filtration of a $\zk$-module $M$ by $\zk$-submodules $M_k$. The sequence of $\zk$-modules $$0\to \cap_k M_k\stackrel{i}\longrightarrow M\stackrel{p}{\longrightarrow} \varprojlim_kM/M_k\;$$ is exact. Indeed, the map $p$ associates to $m\in M$ the sequence $(m+M_1,m+M_2,\ldots)\in\varprojlim_kM/M_k;$ its kernel is $\ker p=\cap_kM_k$, what explains exactness.

\begin{definition} The module $M$ is \emph{complete} (resp., \emph{Hausdorff}, \emph{Hausdorff-complete}) with respect to the considered filtration $M_k$ if and only if $p$ is surjective (resp., $p$ is injective, $p$ is bijective).\end{definition}

We will consider sequences of partial sums in $M$, i.e. sequences of the type $\sum_{k=0}^n m_k$, $m_k\in M$, $n\in\N$.

\begin{definition} A sequence $\sum_{k=0}^n m_k$ of partial sums in $M$ is a \emph{Cauchy sequence} if $m_k\in M_k$, for all $k$.\end{definition}

Indeed, in the topology with basis $m+M_k$, $m\in M$, $k\in \N$, the sequence $\sum_{k=r}^sm_k\in M_r$ converges to 0 if $r,s\to\infty$.

\begin{prop} If $M$ is Hausdorff (resp., complete, Hausdorff-complete), any sequence (resp., any Cauchy sequence) has at most one (resp., at least one, a unique) limit.\end{prop}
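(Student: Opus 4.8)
The plan is to dispatch the three assertions separately, observing that the last is merely the conjunction of the first two: Hausdorff-completeness of $M$ means $p$ is bijective, hence both injective and surjective. Throughout I will use that, in the topology attached to the filtration, a sequence $(a_n)$ converges to $a$ exactly when for every $\ell$ there is an index $N$ with $a_n - a \in M_\ell$ for all $n \ge N$; for a sequence of partial sums $a_n = \sum_{k=0}^n m_k$ one has $a_n - a_{n-1} = m_n$, so the Cauchy condition $m_k \in M_k$ says precisely that consecutive terms become arbitrarily deep in the filtration.

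First I would treat the Hausdorff case and show uniqueness of limits. Suppose $(a_n)$ converges to both $a$ and $b$. Fixing $\ell$ and choosing $n$ large enough that $a_n - a \in M_\ell$ and $a_n - b \in M_\ell$, I get $a - b = (a_n - b) - (a_n - a) \in M_\ell$; since $\ell$ is arbitrary, $a - b \in \bigcap_k M_k = \ker p$, and injectivity of $p$ forces $a = b$. Note that this step uses neither the partial-sum structure nor the Cauchy hypothesis.

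Next I would treat the complete case and produce a limit for a given Cauchy sequence $a_n = \sum_{k=0}^n m_k$ with $m_k \in M_k$. The idea is to build the element $\xi = (\xi_\ell)_\ell \in \varprojlim_\ell M/M_\ell$ with $\xi_\ell := a_{\ell-1} + M_\ell$ (and $\xi_0 = 0$), then lift it through $p$. The point that needs checking is that $\xi$ is a coherent sequence: the transition map carries $\xi_\ell = a_{\ell-1} + M_\ell$ to $a_{\ell-1} + M_{\ell-1}$, which coincides with $\xi_{\ell-1} = a_{\ell-2} + M_{\ell-1}$ because $a_{\ell-1} - a_{\ell-2} = m_{\ell-1} \in M_{\ell-1}$ --- this is exactly where the Cauchy condition enters. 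By surjectivity of $p$ there is $a \in M$ with $p(a) = \xi$, i.e. $a - a_{\ell-1} \in M_\ell$ for every $\ell \ge 1$; hence for any $\ell$ and any $n \ge \ell - 1$ we have $a - a_n \in M_{n+1} \subseteq M_\ell$, so $a_n \to a$.

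Finally, for the Hausdorff-complete case the Cauchy sequence has a limit by the previous step and that limit is unique by the Hausdorff step, which yields the parenthetical ``a unique limit''. I do not expect a genuine obstacle here; the only place that calls for attention is the index bookkeeping in the definition of $\xi$ and the observation that its coherence is equivalent to the defining property $m_k \in M_k$ of a Cauchy sequence of partial sums.
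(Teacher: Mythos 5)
Your proof is correct and follows essentially the same route as the paper: the Hausdorff case via $a-b\in\bigcap_k M_k=\ker p$, and the completeness case by assembling the coherent element $\bigl(\sum_{k=0}^{n}m_k+M_{n+1}\bigr)_n$ of $\varprojlim_k M/M_k$ (coherence being exactly the Cauchy condition $m_k\in M_k$) and lifting it through the surjection $p$. The only difference is cosmetic index bookkeeping in the definition of the inverse-limit element.
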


\begin{proof} If $M$ is Hausdorff, any sequence $(m(n))_n$ has at most one limit. Assume that $(m(n))_n$ converges to $m'$ and $m''$. Then, for all $k$, $m'-m''=(m'-m(n))-(m''-m(n))\in M_k$, so that $m'=m''$.

Further, if $M$ is complete, any Cauchy sequence $\sum_{k=0}^nm_k$, $m_k\in M_k$, $n\in\N$, has a limit. Indeed, the sequence $\sum_{k=0}^nm_k+M_{n+1}$, $n\in\N,$ is an element of the limit $\varprojlim_kM/M_k$ (the map $f_{n+1,n+2}$ sends $\sum_{k=0}^{n+1}m_k+M_{n+2}$ to $$\sum_{k=0}^{n+1}m_k+M_{n+1}\simeq \sum_{k=0}^nm_k+M_{n+1}\;.)$$ Hence, it is an image by $p$, i.e. there is $m\in M$ such that $\sum_{k=0}^nm_k+M_{n+1}=m+M_{n+1}$, for all $n$. In other words, $m-\sum_{k=0}^nm_k\in M_{n+1}$, or, still, the Cauchy sequence of partial sums converges to $m$.

Eventually, if $M$ is Hausdorff-complete, any Cauchy sequence of partial sums has a unique limit.\end{proof}

\subsection{Category of locally $\Z_2^n$-ringed spaces}\label{App2}

\begin{definition} A \emph{locally $\Z_2^n$-ringed space} ({\small LZRS}) is a pair $(M, \cA_M)$ made of a topological space $M$ (Hausdorff and second-countable) and a sheaf $\cA_M$ of $\Z_2^n$-commutative associative unital $\R$-algebras, such that the stalks $\cA_{m}$, $m\in M$, are local.\end{definition}

Morphisms of {\small LZRS} are maps that respect all data, i.e. they are made of a continuous base map and a sheaf morphism that respects the maximal ideals:

\begin{definition}
A \emph{morphism} of {\small LZRS} is a map $\Phi$ between two {\small LZRS} $(M, \cA_M)$ and $(N, {\cal B}_N)$, made of
\begin{enumerate}
\item a continuous map $\zvf: M\to N$,
\item a family, indexed by the open subsets $V\subset N$, of graded unital $\R$-algebra morphisms
$$ \phi_{V}^{*}: {\cal B}_N (V) \to \cA_M \lp \zvf^{-1}(V) \rp\;, $$
called \emph{pullback maps},

\begin{itemize}\item which commute with the restriction maps of the sheaves $\cA_M$ and ${\cal B}_N$,
\item and are such that the naturally induced graded unital $\R$-algebra morphisms
$$\zvf^*_m:{\cal B} _{\zvf(m)}\to \cA_{m}\;,$$ $m\in M$, respect the maximal
ideals, i.e. satisfy 
$$
 \phi^{*}_{m}( \frak{m}_{\zvf(m)}) \subset
\frak {m}_{m}\;.
$$
\end{itemize}
\end{enumerate}
\end{definition}

Locally $\Z_2^n$-ringed spaces form a category ${\tt LZRS}$ with obvious identity morphisms and composition.

\subsection{Extension and gluing of sheaves}\label{App3}

\subsubsection{Extension of a sheaf on a basis}

Let us recall that a sheaf over a topological space is completely defined by its definition on a basis of the topology: if $B$ is a basis of a topological space $M$, there is a 1:1 correspondence $\op{Sh}(M)\stackrel{\sim}{\longrightarrow}\op{Sh}(B)$ between the category of sheaves on $M$ and the category of sheaves on $B$.

Sheaves on $B$, or \emph{$B$-sheaves}, are defined exactly as sheaves on $M$, except that only open subsets in $B$ are considered. For instance, for the assignment ${\cal F}:U\mapsto {\cal F}(U)$, $U\in B$, the gluing condition reads: For any $U\in B$, any cover $(U_i)_i$, $U_i\in B$, of $U$, and any family $(f_i)_i, f_i\in {\cal F}(U_i)$, such that $f_i|_V=f_j|_V$, for any $V\in B$, $V\subset U_i\cap U_j$, there is a unique $f\in{\cal F}(U)$, such that $f|_{U_i}=f_i.$ Similarly, a morphism of $B$-sheaves is defined exactly as a morphism of sheaves.

The functor $\op{Sh}(M)\to\op{Sh}(B)$ is just the forgetful functor. Conversely, any $B$-sheaf $\cal F$ and any $B$-sheaf morphism $\zf:{\cal F}\to {\cal G}$ uniquely extend to a sheaf $\overline{\cal F}$ and a sheaf morphism $\overline{\zf}:\overline{\cal F}\to \overline{\cal G}$, respectively. For instance, the extension $\overline{\cal F}$ is (up to isomorphism) given, for any open $W\subset M$, by $$\overline{\cal F}(W)=\{(f_a)_a:f_a\in{\cal F}(U_a), U_a\in B, U_a\subset W\;\text{and}\; f_a|_V=f_b|_V, V\in B, V\subset U_a\cap U_b\}\;.$$

\subsubsection{Gluing of sheaves}

Sheaves can be glued.

More precisely, if $(U_i)_i$ is an open cover of a topological space $M$, if ${\cal F}_i$ is a sheaf on $U_i$, and if $\zf_{ji}:{\cal F}_i|_{U_i\cap U_j}\to {\cal F}_j|_{U_i\cap U_j}$ is a sheaf isomorphism such that the usual cocycle condition $\zf_{kj}\,\zf_{ji}=\zf_{ki}$ holds, then there is a unique sheaf ${\cal F}$ on $M$, such that ${\cal F}|_{U_i}\simeq{\cal F}_i$.

Actually the sheaf isomorphisms $\psi_i:{\cal F}|_{U_i}\to {\cal F}_i$ satisfy $\zf_{ji}\;\psi_i|_{Ui\cap U_j}=\psi_j|_{U_i\cap U_j}$. Uniqueness means that if there is another sheaf ${\cal F}'$ on $M$ with sheaf isomorphisms $\psi'_i:{\cal F}'|_{U_i}\to {\cal F}_i$ that satisfy the same property, then there exists a unique sheaf isomorphism $\zf:{\cal F}\to {\cal F}'$ such that $\psi'_i\;\zf|_{U_i}=\psi_i.$

The glued sheaf $\cal F$ is defined, for $U\subset M$, by $${\cal F}(U)=\{(f_i)_i\in\prod_i{\cal F}_i(U\cap U_i):\zf_{ji}(f_i|_{U\cap U_i\cap U_j})=f_j|_{U\cap U_i\cap U_j}\}\;.$$

\subsection{Partition of unity}\label{PartUnitQuotShea}

The present section should be read after Section \ref{ContMorph}.

\begin{prop} For any $\Z_2^n$-supermanifold $\cM=(M,\cA_M)$ and any open cover $(U_i)_i$ of $M$, there is a partition of unity of $\cM$ subordinated to $(U_i)_i$.\end{prop}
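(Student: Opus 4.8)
The plan is to reduce the construction to a smooth partition of unity on the base manifold $M$ (which exists since, by the proposition of Section~6.1, $M$ is a second-countable Hausdorff smooth manifold, hence paracompact), to lift it to degree-zero $\Z_2^n$-functions by extension of sheaf sections by zero, and then to correct the resulting family by a global division so that it sums to $1$. Throughout, the point is that every sum that appears is \emph{locally finite over $M$}, not an infinite series in the formal parameters, so that no completeness/convergence subtlety intervenes; the role of nilpotency in the classical argument is played here by Corollary~\ref{p1} (invertibility of even $\Z_2^n$-functions with non-vanishing body) and by the description of $\ze$ as the $\xi$-independent part in local coordinates.

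First I would fix a smooth partition of unity $(\rho_i)_i$ on $M$ subordinated to $(U_i)_i$: $\rho_i\in\Ci_M(M)$, $\op{supp}\rho_i\subset U_i$ closed in $M$, $(\op{supp}\rho_i)_i$ locally finite, $\sum_i\rho_i=1$. Next I would choose, once and for all, a locally finite cover $(W_\za)_\za$ of $M$ by $\Z_2^n$-superchart domains and a smooth partition of unity $(\chi_\za)_\za$ subordinated to it. On $W_\za$ the local isomorphism $\cA_M|_{W_\za}\simeq\Ci_{W_\za}[[\xi^1,\dots,\xi^q]]$ carries $\ze$ to the projection onto the $\xi$-independent part; hence the $\xi$-independent section, still denoted $\rho_i|_{W_\za}$, is a degree-zero element of $\cA^0_M(W_\za)$ with $\ze(\rho_i|_{W_\za})=\rho_i|_{W_\za}$. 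Since $\op{supp}(\chi_\za\,\rho_i|_{W_\za})\subset\op{supp}\chi_\za$ is closed in $M$ and contained in $W_\za$, the section $\chi_\za\,\rho_i|_{W_\za}$ agrees with the zero section on $W_\za\setminus\op{supp}\chi_\za$ and thus extends by zero to a global section $\widetilde{\chi_\za\rho_i}\in\cA^0_M(M)$. I then set
$$\hat\rho_i:=\sum_\za\widetilde{\chi_\za\rho_i}\in\cA^0_M(M),$$
a locally finite sum (the summands are supported in the locally finite family $(\op{supp}\chi_\za)_\za$), with $\op{supp}\hat\rho_i\subset\op{supp}\rho_i\subset U_i$ and, since $\ze$ is an algebra morphism commuting with restrictions and locally finite sums, $\ze(\hat\rho_i)=\sum_\za\chi_\za\rho_i=\rho_i$.

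Then I would put $s:=\sum_i\hat\rho_i$, again a locally finite sum because $\op{supp}\hat\rho_i\subset\op{supp}\rho_i$, so that $s\in\cA^0_M(M)$ and $\ze(s)=\sum_i\rho_i=1$. On each superchart domain $W_\za$, $s|_{W_\za}$ has invertible body $1$, hence is invertible by Corollary~\ref{p1}; the local inverses agree on overlaps (uniqueness of inverses) and glue to $s^{-1}\in\cA_M(M)$, which has $\Z_2^n$-degree $0$ (comparing homogeneous components in $s\,s^{-1}=1$, using that $s$ is of degree $0$ and invertible). Finally I would define
$$\zp_i:=s^{-1}\hat\rho_i\in\cA^0_M(M),$$
for which $\op{supp}\zp_i\subset\op{supp}\hat\rho_i\subset U_i$, the family $(\op{supp}\zp_i)_i$ is locally finite, and $\sum_i\zp_i=s^{-1}\sum_i\hat\rho_i=s^{-1}s=1$; hence $(\zp_i)_i$ is a partition of unity of $\cM$ subordinated to $(U_i)_i$. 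The only genuinely delicate point is the bookkeeping of supports and local finiteness that guarantees $\hat\rho_i$ and $s$ are honest global sections and that $s$ is globally invertible; the rest is the standard manifold argument transported through $\ze$ and Corollary~\ref{p1}.
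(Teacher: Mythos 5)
Your proof is correct and follows essentially the same route as the paper's: lift a classical partition of unity on the base to degree-zero sections through the local superchart isomorphisms, extend by zero, sum, observe that the total sum has body $1$ and is therefore invertible by Corollary~\ref{p1}, and divide. The only (cosmetic) difference is that the paper re-indexes by a locally finite refinement consisting of superchart domains with compact supports, whereas you keep the original index set and use an auxiliary partition of unity $(\chi_\za)_\za$ to perform the lift chart by chart.
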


More precisely, there exists a locally finite cover $(V_j)_j$ of $M$, which is subordinated to $(U_i)_i$, and a family $(s_j)_j\in \cA_M^{0}(M)$, such that the projections $\zg_j:=\ze(s_j)\in\Ci_M(M)$ are nonnegative, the support $\op{supp}(s_j)$ is compact in $V_j$, and $\sum_js_j=1$. The support of a $\Z_2^n$-function $s\in\cA_M(U)$, $U\subset M$, is defined as usual as the complementary of the open subset of identical zeros of $s$ in $U$.\medskip

\begin{proof} Let $(V_j,\zg_j)_j$ be a partition of unity of the classical smooth manifold $M$ that is subordinated to $(U_i)_i$: $\op{supp}(\zg_j)$ is compact in $V_j$ and $\sum_j\zg_j=1$. We may of course assume that the $V_j$ are $\Z_2^n$-superchart domains: on $V_j$, the $\Z_2^n$-supermanifold $(M,\cA_M)$ is isomorphic to a $\Z_2 ^n$-superdomain. If $\Phi=(\phi,\phi^*)$ is this isomorphism, set $f_j=\phi^*(\zg_j\circ\zvf^{-1})\in\cA^{0}_{M}(V_j)$. It is clear that $\ze(f_j)=\zg_j$ and easily seen that $\op{supp}(f_j)=\op{supp}(\zg_j)\subset V_j$. Extend now $f_j$ by 0, so that $f_j\in\cA^{0}_M(M)$, and set $f=\sum_jf_j.$ This sum is well-defined in $\cA_M^{0}(M)$, due to local finiteness of $(V_j)_j$, and $\ze(f)=\sum_j\ze(f_j)=\sum_j\zg_j=1$. The latter implies that $f|_{V_k}$ is invertible for all $k$. When gluing these local inverses, we get a global inverse $f^{-1}\in\cA^{0}_M(M)$. It now suffices to set $s_j=f^{-1}f_j\in\cA_M^{0}(M)$.   \end{proof}

\section{Acknowledgements}

T. Covolo thanks the Luxembourgian NRF for support via AFR grant 2010-1, 786207. The research of J. Grabowski has been founded by the  Polish National Science Centre grant under the contract number DEC-2012/06/A/ST1/00256. The research of N. Poncin has been supported by the Grant GeoAlgPhys 2011-2014 awarded by the University of Luxembourg. Moreover, the authors are grateful to Sophie Morier-Genoud and Valentin Ovsienko, whose work on $\Z_2^n$-gradings \cite{MGO1}, \cite{MGO2} was the starting point of this paper. They thank Dimitry Leites for his suggestions and valuable support. N. Poncin appreciated Pierre Schapira's explanations on sheaves.

\vspace{5mm}
\small{
\noindent Tiffany COVOLO\\University of
Luxembourg\\ Campus Kirchberg, Mathematics Research Unit\\ 6, rue Richard Coudenhove-Kalergi, L-1359 Luxembourg
City, Grand-Duchy of Luxembourg
\\Email: tiffany.covolo@uni.lu \medskip

\noindent Janusz GRABOWSKI\\ Polish Academy of Sciences\\ Institute of
Mathematics\\ \'Sniadeckich 8, P.O. Box 21, 00-656 Warsaw,
Poland\\Email: jagrab@impan.pl \medskip

\noindent Norbert PONCIN\\University of Luxembourg\\
Campus Kirchberg, Mathematics Research Unit\\ 6, rue Richard Coudenhove-Kalergi, L-1359 Luxembourg
City, Grand-Duchy of Luxembourg\\Email: norbert.poncin@uni.lu}

\end{document}